\DeclareSymbolFont{bbold}{U}{bbold}{m}{n}
\DeclareSymbolFontAlphabet{\mathbbm}{bbold}
\theoremstyle{plain}
	\newtheorem{theorem*}{Theorem}
	\newtheorem{theorem}{Theorem}[section]
\numberwithin{equation}{section}
	\newtheorem{proposition}[theorem]{Proposition}
	\newtheorem{lemma}[theorem]{Lemma}
	\newtheorem{corollary}[theorem]{Corollary}
	\newtheorem{claim}{Claim}[theorem]
\theoremstyle{definition}
	\newtheorem{definition}[theorem]{Definition}
	\newtheorem{notation}[theorem]{Notation}
	\newtheorem*{acknow}{Acknowledgements}
\theoremstyle{remark}
	\newtheorem{remark}[theorem]{Remark}
\numberwithin{equation}{section}
\DeclareMathOperator{\Ult}{Ult}
\DeclareMathOperator{\rank}{rank}
\DeclareMathOperator{\Ext}{Ext}
\DeclareMathOperator{\dom}{dom}
\DeclareMathOperator{\ran}{ran}
\DeclareMathOperator{\Hull}{Hull}
\DeclareMathOperator{\trace}{tr}
\DeclareMathOperator{\otp}{otp}
\newcommand{\vin}{\rotatebox[origin=c]{90}{$\in$}}
\newcommand{\Ord}{Ord}
\newcommand{\bbP}{\mathbb{P}}
\newcommand{\iterat}{{\mathbf{It}}}
\newcommand{\kl}{\lessdot}
\newcommand{\pow}{\mathscr{P}}
\newcommand{\NS}{\mathsf{NS}_{\omega_1}}
\renewcommand{\phi}{\varphi}
\newcommand{\theforcingg}{ \mathbb{P}^{c\text{-}c}}
\newcommand{\theforcing}[2]{ \mathbb{P}^{c\text{-}c}(#1,#2) }
\title[{Increasing the second uniform indiscernible by strongly ssp forcing}]
{Increasing the second uniform indiscernible by strongly ssp forcing}
\date{\today}
\author[BdB]{Ben De Bondt}
\address[Ben De Bondt]
{Institut de Math\'ematiques de Jussieu (IMJ-PRG)\\
Universit\'e Paris Cit\'e\\
B\^atiment Sophie Germain\\
8 Place Aur\'elie Nemours \\ 75013 Paris, France}
\email{ben.de-bondt@imj-prg.fr}
\urladdr{https://perso.imj-prg.fr/ben-debondt/}
\author[BV]{Boban Veli\v{c}kovi\'c}
\address[Boban Veli\v{c}kovi\'c]
{Institut de Math\'ematiques de Jussieu (IMJ-PRG)\\
Universit\'e Paris Cit\'e\\
B\^atiment Sophie Germain\\
8 Place Aur\'elie Nemours \\ 75013 Paris, France}
\email{boban.velickovic@imj-prg.fr}
\urladdr{https://webusers.imj-prg.fr/~boban.velickovic/}
\keywords{Stationary Set Preserving Forcing,  Side Conditions, Second Uniform Indiscernible, Games.}
\subjclass[2020]{03E57, 03E50, 03E05}
\begin{document}
\maketitle
\setcounter{tocdepth}{1}
\begin{abstract} 
We introduce a new and natural stationary set preserving forcing  $\theforcing{\lambda}{\mu}$ that (under $\NS$ precipitous +
existence of $H_{\theta}^\#$ for a sufficiently large regular $\theta$) increases the second uniform indiscernible $\mathbf{u}_2$ beyond some given ordinal $\lambda.$ The forcing $\theforcingg$ shares this property with forcings defined in \cite{claverie_schindler} and \cite{ketlarzap}. As a main tool we use certain natural open two player games which are of independent interest, viz.\  the capturing games $\mathbf{G}^{cap}_M(X)$ and the catching-capturing games $\mathbf{G}^{c\text{-}c}_M(X)$. In particular, these games are used to isolate a special family of countable elementary submodels $M \prec H_\theta$ that occur as side conditions in $\theforcingg$ and thus allow to control the forcing in a strong way.
\end{abstract}

\section{Introduction}\label{S.Intro}
To date, the study of forcing axioms is one of the foremost lines in set theoretic research. Reflecting upon the nature of
this phenomenon, one could argue that the true reason for the tremendous success of a forcing axiom as $\mathsf{PFA}$ is largely twofold. On the one hand, proper forcings are sufficiently well-behaved to allow a very fruitful iteration theory to be developed. Secondly, and just as crucially, the class of proper forcings is sufficiently rich to allow for a variety of very different proper forcings, giving in turn rise to many applications. A versatile technique for constructing such proper forcings, allowing them to be tailored to perform various different tasks, was introduced by Todorcevic (\cite{todorcevic_proper_forcing_axiom} and \cite{todorcevic_directed_sets}, see also \cite{todorcevic_partitions_topology}, \cite{todorcevic_irredundant_sets}, \ldots) in the 1980’s and proceeds by including in a forcing condition finitely many working parts, together with a finite chain of countable elementary substructures of~$H_\theta.$ The behaviour of such forcing is determined by carefully fixing the allowed configurations of working parts and models occuring in the conditions. One can contrast this with the situation for stationary set preserving  (from now on: ssp) forcings. The satisfactory iteration theory of semiproper forcings makes up for the lack of a general iteration theory for ssp forcings and allows to prove the consistency relative to a supercompact cardinal of the ssp-forcing axiom $\mathsf{MM}.$ Furthermore, the power of Woodin’s $\mathbb{P}_{\max}$-forcing technique, in conjunction with the recent results by Asper\'o and Schindler (\cite{aspero_schindler}) rephrasing (under suitable conditions) Woodin's axiom $(\ast)$ as a bounded forcing axiom, suggest many possible applications of ssp forcing strictly beyond proper forcing. However, a technique for constructing ssp forcings just as versatile as Todorcevic’s side condition technique has largely remained absent, even though the $\mathcal{L}$-forcing technique developed by Jensen (\cite{jensen_Lforcing}) and further developed in \cite{claverie_schindler},\cite{doebler_schindler} and ultimately in \cite{aspero_schindler}, may be considered a candidate. In this article we exhibit one rudimentary instance of an alternative approach towards constructing specific ssp forcings that stays much closer to the side condition method for proper forcings. More precisely, we consider a reasonably simple forcing $\theforcingg$ that is almost, but not entirely, a pure side condition forcing. The twists are the following: the side conditions are (morally) allowed to grow through the forcing procedure and working parts are included in the forcing conditions with the sole purpose of guiding this growth of the models. To guarantee that these models still fulfill their task as side conditions (and thus make the resulting forcing ssp) they can however not be allowed to absorb any new countable ordinals. Consequently, the crucial idea in this prima facie quite flexible adaptation of the proper-style side condition method is that only models $M \prec H_\theta$  exhibiting the right kind of growth behaviour are admissable as side conditions. The gain for the particular forcing $\theforcing{\lambda}{\mu}$ is that instead of just adding any sequence $(M^G_\alpha : \alpha < \omega_1)$ of countable models with union equal to $H_\theta$ (as a trivial pure side condition forcing would do), it will add a special such sequence that transitive collapses to a generic iteration with $\omega_1$-th iterate of the form $H_\lambda.$ Moreover under the existence of $H_\lambda^\#$, it can be arranged that the first model in this generic iteration is iterable. Recall that under the presence of sharps for reals, all $\omega_1$-th iterates $ \mathscr{M}_{\omega_1}$ occuring in some generic iteration $\iterat = (\mathscr{M}_\beta, j_{\alpha\beta},G_\alpha : \alpha < \beta \leq \omega_1)$ of a countable transitive iterable model $\mathscr{M}_0$ are, by standard boundedness arguments, bounded in rank by the second uniform indiscernible ordinal ${\bf u}_2$. 
Thus, it makes sense to consider the forcing $\theforcing{\lambda}{\mu}$ within the light of the still poorly understood problem of controlling the size of ${\bf u}_2$ by forcing. This problem has a significant history, starting with \cite{steel_wesep} where Steele and Van Wesep force over suitable determinacy models
to obtain the consistency of $\mathsf{ZFC}$ + $\NS$ saturated + ${\bf u}_2=\omega_2$. Later, Woodin showed \cite[Theorem 3.17]{woodin} that in fact ${\bf u}_2 = \omega_2$ follows from $\NS$ saturated + $\pow(\omega_1)^\#$ exists. Thus, ${\bf u}_2 = \omega_2$ can be forced over a $\mathsf{ZFC}$-model from a Woodin cardinal with a measurable above it using the forcing introduced by Shelah to make $\NS$ saturated \cite[Theorem 2.64]{woodin}.  
In \cite{steel_welch}, Steel and Welch showed that
$(\forall x \in \mathbb{R})\ x^\# \text{ exists } + \text{ }{\bf u}_2 = \omega_2$
implies the existence of an inner model with a strong cardinal. The above premise also follows from the theory $\mathsf{BMM}$ + $\NS$ precipitous, whose precise consistency strength is itself unclear.
That  $\mathsf{BMM}$ + $\NS$ precipitous implies ${\bf u}_2 = \omega_2$ is in turn a consequence of the existence of ssp forcings such as $\theforcingg$, which, working under the theory $\mathsf{ZFC} + \NS$ precipitous + existence of sufficient sharps, increase the value of $\mathbf{u}_2$ beyond some arbitrarily high ordinal.
So far, two different such ssp forcings had been described in the literature, namely a Namba-like forcing in \cite{ketlarzap} and an $\mathcal{L}$-forcing in \cite{claverie_schindler}. It is worth to note that both these two forcings, as well as the forcing $\theforcingg$ described here, use the same argument to affect the value of $\mathbf{u}_2$ which goes back to Woodin's argument \cite[Theorem 3.17]{woodin}, namely they all proceed through adding a countable transitive iterable model $\mathscr{M}$ together with a suitable length-$\omega_1$ generic iteration that blows it up to arbitrarily large rank. Also in some other aspects all three forcings behave very much alike. In particular, all three share with Namba forcing the property that they give some large regular cardinal cofinality $\omega$. We believe that a main advantage of the construction as presented here is that the inclusion of suitable elementary substructures into the forcing conditions elucidates the proof of preservation of stationarity of ground-model stationary subsets of $\omega_1.$ Indeed, the argument for proving that $\theforcingg$ is ssp becomes entirely standard once the right class of models to be used as side conditions has been identified and the existence of projective stationary many such models has been proved. This additionally allows us to abstractly define a property of forcings that we call \emph{strongly ssp} and is reminiscent of the similar notion of strong properness that is commonly satisfied by proper side condition forcings. Strongly ssp forcings enjoy additional strong properties beyond being merely ssp such as not adding any fresh functions $f: \omega_1 \to V.$ The forcing $\theforcingg$ is strongly ssp and therefore must add the many new reals that thin out the uniform indiscernibles, without adding any fresh subsets of $\omega_1.$

\section{Preliminaries}\label{S.prelim}
We gather here several notions and results that will be needed below.

\subsection{Iterable structures}

$\NS$ denotes the non-stationary ideal on $\omega_1,$ $\NS^+$ is the set of stationary subsets of $\omega_1.$ We also use $\NS^+$ as a shorthand for the partially ordered set $\displaystyle (\NS^+, \subseteq)$, that is, whenever $\NS^+$ is without further indication treated as a poset, we have silently endowed it with the partial order $S \leq T \Leftrightarrow S \subseteq T.$ 

$\mathsf{ZFC}^-$ is the theory obtained from $\mathsf{ZF}$ by removing the powerset axiom and adding the collection axiom and the axiom that every set can be wellordered. Let $\mathsf{ZFC}^\bullet$ be the theory
\[ \mathsf{ZFC}^- + \ \pow(\pow(\omega_1)) \text{ exists } + \  \NS \text{ is precipitous.}\]

Whenever $\mathscr{H}$ is a transitive model of $\mathsf{ZFC}^-$ and $M \prec \mathscr{H}$ is  a countable elementary substructure, then $\delta(M)$ denotes the countable ordinal $M\cap \omega_1^\mathscr{H},$ and for every 
$x \in \bigcup M,$ we define
\[\Hull(M,x) = \{ f(x) : f \in M, x\in \dom(f) \}.\]
Note that under these conditions, $\Hull(M,x)$ is the minimal countable elementary submodel of $\mathscr{H}$ that $\subseteq$-extends $M$ and contains $x$ as an element.
The meaning of the statement	\[ x \parallel_{\omega_1} M, \]
is by definition that
\[\Hull(M,x) \cap \omega_1^\mathscr{H} = M \cap \omega_1^\mathscr{H}.\]

\begin{definition}\label{defselfgeneric}
A countable model $M\prec \mathscr{H} \vDash \mathsf{ZFC}^\bullet$ is called \emph{selfgeneric} if for every $\mathcal{A}\in M$ such that 
\[ \mathscr{H} \vDash \mathcal{A}\subseteq \NS^+  \text{ is a maximal antichain},\]
there exists $S\in \mathcal{A}\cap M$ such that $\delta(M)\in S$.  
\end{definition} 

\begin{definition}
	Let $\mathscr{M}$ be a countable transitive model of the theory $\mathsf{ZFC}^\bullet.$

	A \emph{generic iteration} of $\mathscr{M} = \mathscr{M}_0$ of length $\gamma$ (with $\gamma\leq \omega_1+1$) is a commuting system $\iterat = (\mathscr{M}_\beta, j_{\alpha\beta},G_\alpha : \alpha < \beta < \gamma)$ satisfying
	\begin{enumerate}[label=(\alph*)]
		\item for every $\alpha + 1 < \gamma$, $G_\alpha\subset (\NS^+)^{\mathscr{M}_\alpha}$ is a filter that is $(\NS^+)^{\mathscr{M}_\alpha}$-generic over $\mathscr{M}_\alpha,$
		\item for every $\alpha + 1 < \gamma$, $\mathscr{M}_{\alpha+1}$ is the (transitivised) generic ultrapower of $\mathscr{M}_\alpha$ induced by $G_\alpha$, $j_{\alpha, \alpha+1}$ is the corresponding ultrapower map,
		\item  for every limit ordinal $\beta <\gamma$, $(\mathscr{M}_\beta, (j_{\alpha\beta})_{\alpha < \beta} )$ is the (transitivised) direct limit of the system $(\mathscr{M}_{\beta'}, j_{\alpha\beta'} : \alpha < \beta' < \beta)$. 
	\end{enumerate}
	
	The $\mathsf{ZFC}^\bullet$-model $\mathscr{M}$ is called \emph{iterable} if for every limit ordinal $\gamma \leq \omega_1$ and every generic iteration  $\iterat = (\mathscr{M}_\beta, j_{\alpha\beta},G_\alpha : \alpha < \beta < \gamma)$  of length $\gamma$ of $\mathscr{M}$, the direct limit of the system  $(\mathscr{M}_\beta, j_{\alpha\beta}: \alpha < \beta < \gamma)$  is well-founded. 
\end{definition}

We will make use of the following standard way of constructing iterable models from sharps.
\begin{lemma}\label{lem trans_coll}
	
	Suppose $\mathscr{H}$ is a transitive $\mathsf{ZFC}^\bullet$-model with $\omega_1 \subseteq \mathscr{H}$ and $\theta \in \mathscr{H}$ is an ordinal such that
	\begin{enumerate}[label=(\alph*)]
		\item $\mathscr{H} \vDash \theta $ is a regular cardinal and $\theta > |\pow(\pow(\omega_1))|,$
		\item $\mathscr{H} \vDash$ $H_\theta$ and $H_\theta^\#$ both exist.
	\end{enumerate}
	If $M\prec \mathscr{H}$ is countable and contains $\theta,$ then the transitive collapse of $M \cap H^\mathscr{H}_\theta$ is an iterable $\mathsf{ZFC}^\bullet$-model.
\end{lemma}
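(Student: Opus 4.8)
The plan is to prove the two halves of the conclusion — ``$N$ is a $\mathsf{ZFC}^\bullet$-model'' and ``$N$ is iterable'' — separately, where throughout $N$ denotes the transitive collapse of $M\cap H^{\mathscr{H}}_\theta$, with collapsing map $\pi_N\colon M\cap H^{\mathscr{H}}_\theta\to N$. For the first half I would begin by checking that $\mathscr{H}\vDash H_\theta\vDash\mathsf{ZFC}^\bullet$. Since $\mathscr{H}$ believes $\theta$ is a regular uncountable cardinal it believes $H_\theta\vDash\mathsf{ZFC}^-$ by the usual argument; since moreover $\theta>|\pow(\pow(\omega_1))|^{\mathscr{H}}$, the sets $\pow(\omega_1)^{\mathscr{H}}$ and $\pow(\pow(\omega_1))^{\mathscr{H}}$ lie in $H^{\mathscr{H}}_\theta$, and $H^{\mathscr{H}}_\theta$ computes powersets up to that size, clubs and stationary subsets of $\omega_1$, the ideal $\NS$, and — precipitousness being a statement about objects of hereditary size at most $|\pow(\pow(\omega_1))|$ — the precipitousness of $\NS$, all correctly; hence $\mathscr{H}\vDash H_\theta\vDash\mathsf{ZFC}^\bullet$. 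As $\theta\in M\prec\mathscr{H}$ we have $H^{\mathscr{H}}_\theta\in M$, and a routine application of the Tarski--Vaught criterion gives $M\cap H^{\mathscr{H}}_\theta\prec H^{\mathscr{H}}_\theta$: if $M$ believes $H_\theta\vDash\exists x\,\varphi(x,\bar a)$ with $\bar a\in M\cap H^{\mathscr{H}}_\theta$, then $M$ contains a $b$ with $M\vDash(b\in H_\theta\wedge H_\theta\vDash\varphi(b,\bar a))$, and $b\in M\cap H^{\mathscr{H}}_\theta$. Being the transitive collapse of an elementary submodel of a model of $\mathsf{ZFC}^\bullet$, $N$ is a model of $\mathsf{ZFC}^\bullet$.

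For the second half I would first reduce iterability of $N$ to the assertion that $N^\#$ exists in $\mathscr{H}$. Let $\pi\colon M\to\bar M$ be the transitive collapse of all of $M$; then $\pi\restriction(M\cap H^{\mathscr{H}}_\theta)=\pi_N$, so $N=\pi(H^{\mathscr{H}}_\theta)=(H_{\pi(\theta)})^{\bar M}$, and $\sigma:=\pi_N^{-1}$ is an elementary embedding $N\to H^{\mathscr{H}}_\theta$ with $\sigma(\omega_1^N)=\omega_1^{\mathscr{H}}$ and range bounded in $\theta$. Since the existence of a sharp of a set $X$ is witnessed by a wellfounded object — the Ehrenfeucht--Mostowski blueprint, essentially a real over $X$ — it reflects to elementary submodels and survives transitive collapse; and $(H^{\mathscr{H}}_\theta)^\#$ is definable in $\mathscr{H}$ from $\theta\in M$, so $(H^{\mathscr{H}}_\theta)^\#\in M$ and applying $\pi$ yields that $N^\#=\pi((H^{\mathscr{H}}_\theta)^\#)=((H_{\pi(\theta)})^{\bar M})^\#$ exists in $\mathscr{H}$. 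It therefore suffices to prove the following standard fact, which I would invoke rather than reprove in full (cf.\ \cite{woodin}): every countable transitive model $P$ of $\mathsf{ZFC}^\bullet$ whose sharp exists is iterable.

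For that fact I would induct on the length $\gamma\leq\omega_1$ of a putative generic iteration $\iterat=(\mathscr{M}_\beta,j_{\alpha\beta},G_\alpha:\alpha<\beta<\gamma)$ of $P=\mathscr{M}_0$, showing all direct limits wellfounded. The successor step $\beta\mapsto\beta+1$ is exactly precipitousness of $\NS$ inside the (inductively wellfounded, hence transitive) model $\mathscr{M}_\beta$, which makes $\Ult(\mathscr{M}_\beta,G_\beta)$ wellfounded. A limit $\beta$ of uncountable cofinality is free, since any infinite descending $\in$-chain in the direct limit already lies in some earlier $\mathscr{M}_{\beta'}$; and $\gamma=\omega_1$ reduces to countable $\gamma$, since a descending $\omega$-chain in the $\omega_1$-st limit has entries from only countably many, hence from a single, $\mathscr{M}_{\beta^*}$ with $\beta^*<\omega_1$. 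This leaves a limit $\beta$ of cofinality $\omega$, and this is where — and, in a precise sense, the only place where — the hypothesis that $P^\#$ exists enters: one builds, by recursion on $\beta'<\beta$, a coherent system of $\in$-preserving maps of the $\mathscr{M}_{\beta'}$ into wellfounded models obtained by iterating the Ehrenfeucht--Mostowski model attached to $P^\#$, choosing the ``seeds'' used to push the maps across the generic ultrapowers among the indiscernibles supplied by $P^\#$; the indiscernibility then forces the $\omega$-indexed direct limit of these maps to be an $\in$-preserving map of $\mathscr{M}_\beta$ into a wellfounded model, so $\mathscr{M}_\beta$ is wellfounded. I expect this coherence of realizing embeddings through limits of cofinality $\omega$ to be the main obstacle: precipitousness alone keeps each one-step ultrapower wellfounded but gives no control over the composite at $\omega$, and it is precisely the remarkability packaged into the sharp that rescues it.
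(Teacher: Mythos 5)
The paper does not actually prove this lemma --- it defers to ``standard modifications'' of Woodin's argument for \cite[Theorem 3.14]{woodin} --- so your proposal has to stand on its own. Most of it does: $H_\theta^{\mathscr{H}}$ correctly computes $\pow(\omega_1)$, $\pow(\pow(\omega_1))$ and (via the game/functional characterisation of precipitousness, which is exactly what the clause ``$\pow(\pow(\omega_1))$ exists'' in $\mathsf{ZFC}^\bullet$ is there for) the precipitousness of $\NS$; Tarski--Vaught gives $M\cap H_\theta^{\mathscr{H}}\prec H_\theta^{\mathscr{H}}$; the condensation step $\pi\bigl((H_\theta^{\mathscr{H}})^\#\bigr)=N^\#$ is standard; and the successor, uncountable-cofinality and $\gamma=\omega_1$ cases of the iterability induction are handled correctly.

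The gap is at limits of cofinality $\omega$, precisely where you yourself locate the difficulty, and the mechanism you propose there does not work. A ``seed'' that pushes a realisation $k\colon\mathscr{M}_{\beta'}\to Q$ across the generic ultrapower $\Ult(\mathscr{M}_{\beta'},G_{\beta'})$, via $[f]_{G_{\beta'}}\mapsto k(f)(c)$, must be an ordinal $c$ below $k(\omega_1^{\mathscr{M}_{\beta'}})$ lying in $\bigcap k[G_{\beta'}]$; the Silver indiscernibles supplied by $P^\#$ are indiscernibles for the proper-class model $L[P]$ and lie above every ordinal of the transitive set $P$, so they cannot serve as such seeds. Moreover, even granting coherent realisations of each $\mathscr{M}_{\beta'}$ into a wellfounded $Q_{\beta'}$, wellfoundedness of $\mathscr{M}_\beta$ needs the direct limit of the $Q_{\beta'}$ to be wellfounded, which is the same problem one level up and is merely asserted in your sketch. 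Relatedly, you discard the ambient structure too early: the useful reduction is not to the bare claim ``every countable transitive $\mathsf{ZFC}^\bullet$-model with a sharp is iterable'' (which is not obviously what Woodin proves), but retains that $N=(H_{\pi(\theta)})^{\bar M}$ where $\bar M$ is the transitive collapse of all of $M$ and $\pow(\omega_1)^{\bar M}=\pow(\omega_1)^{N}$, so that any generic iteration of $N$ lifts, with the very same generic filters, to a generic iteration of $\bar M$ and of hulls of $L[N]^{\bar M}$ generated by (genuine, by condensation) $L[N]$-indiscernibles. It is the coherent shifting of those indiscernibles by the lifted iteration maps that yields, via the Gaifman-style argument, wellfoundedness at countable limits; this is the real content of the lemma and the one step of your outline that cannot be waved through.
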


A proof of Lemma~\ref{lem trans_coll} can be found by applying standard modifications to the argument given in \cite{woodin} to prove \cite[Theorem 3.14]{woodin}.   

\begin{definition}
	Let $\mathscr{H}$ be a transitive $\mathsf{ZFC}^\bullet$-model and let $(M_\alpha: \alpha < \omega_1)$ be a continuous $\subseteq$-increasing sequence of countable elementary submodels of $\mathscr{H}.$ We call $(M_\alpha: \alpha < \omega_1)$ an \emph{iteration-inducing filtration} of $\mathscr{H}$ if
	\[ \bigcup_{\alpha < \omega_1} M_\alpha = \mathscr{H}, \]
	and additionally the following two conditions hold for every $\alpha < \omega_1:$
	\begin{itemize}
		\item $M_\alpha$ is selfgeneric,
		\item $M_{\alpha+1}=\Hull(M_\alpha,\delta(M_\alpha)).$
	\end{itemize}
\end{definition}

There is a straightforward way to translate between generic iterations of length $\omega_1$ and iteration-inducing filtrations. Indeed, the following is easily checked.

\begin{lemma}\label{lem unique_generic_iteration}
	Let $\mathscr{H}$ be a transitive $\mathsf{ZFC}^\bullet$-model.
	\begin{enumerate}
		\item If $\iterat = (\mathscr{M}_\beta, j_{\alpha\beta},G_\alpha : \alpha < \beta < \omega_1)$ is a generic iteration of length $\omega_1$ with (transitivised) direct limit equal to $\mathscr{H},$ then $(j_{\alpha\omega_1}[\mathscr{M}_\alpha] : \alpha < \omega_1)$ is an iteration-inducing filtration of $\mathscr{H},$
		\item if $(M_\alpha: \alpha < \omega_1)$ is an iteration-inducing filtration of $\mathscr{H},$ then there exists a unique generic iteration $\iterat = (\mathscr{M}_\beta, j_{\alpha\beta},G_\alpha : \alpha < \beta < \omega_1)$ of length $\omega_1$ such that the following diagrams $(\alpha<\beta < \omega_1)$ commute:
		
		\[\begin{array}{lcl}
			M_\alpha &\quad \subseteq \quad& M_\beta\\
			\bigg\downarrow \pi_\alpha && \bigg\downarrow \pi_\beta\\
			\mathscr{M}_\alpha& \quad\xrightarrow{j_{\alpha\beta}}\quad &\mathscr{M}_\beta
		\end{array}\]
	with $\pi_\alpha:M_\alpha \to \mathscr{M}_\alpha$ for each $\alpha < \omega_1$ being the transitivising isomorphism. Moreover, the direct limit of the system $\iterat$ is equal to $\mathscr{H}.$
	\end{enumerate} 
\end{lemma}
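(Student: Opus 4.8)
The plan is to make the correspondence in both directions completely explicit and to note that almost all of the relevant data is forced, so that in each direction the work reduces to verifying the defining clauses; the only ingredients beyond routine bookkeeping with transitive collapses are selfgenericity of the models and one \L o\'s-type computation. For part (1), set $M_\alpha:=j_{\alpha\omega_1}[\mathscr{M}_\alpha]$ for $\alpha<\omega_1$. That $(M_\alpha:\alpha<\omega_1)$ is a continuous $\subseteq$-increasing chain of countable elementary submodels of $\mathscr{H}$ with union $\mathscr{H}$ is immediate from the $j_{\alpha\beta}$ being elementary, commuting, and having $\mathscr{H}$ as direct limit (for continuity at a limit $\lambda$ use $j_{\alpha\omega_1}=j_{\lambda\omega_1}\circ j_{\alpha\lambda}$ and that $\mathscr{M}_\lambda$ is the direct limit there). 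Since for $\alpha<\omega_1$ the map $j_{\alpha,\alpha+1}$ is a genuine generic ultrapower embedding, $j_{\alpha\omega_1}$ has critical point $\omega_1^{\mathscr{M}_\alpha}$ and sends it to $\omega_1^{\mathscr{H}}$, so $\delta(M_\alpha)=\omega_1^{\mathscr{M}_\alpha}$. For selfgenericity, given $\mathcal{A}\in M_\alpha$ with $\mathscr{H}\vDash\mathcal{A}\subseteq\NS^+\text{ is a maximal antichain}$, write $\mathcal{A}=j_{\alpha\omega_1}(\bar{\mathcal{A}})$, pick $\bar S\in\bar{\mathcal{A}}\cap G_\alpha$ by genericity of $G_\alpha$ over $\mathscr{M}_\alpha$, and use the standard equivalence $\bar S\in G_\alpha\iff\omega_1^{\mathscr{M}_\alpha}\in j_{\alpha,\alpha+1}(\bar S)$ together with the fact that $j_{\alpha+1,\omega_1}$ fixes $\omega_1^{\mathscr{M}_\alpha}$ to conclude $\delta(M_\alpha)\in j_{\alpha\omega_1}(\bar S)\in\mathcal{A}\cap M_\alpha$. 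Finally, writing a generic element of $\mathscr{M}_{\alpha+1}=\Ult(\mathscr{M}_\alpha,G_\alpha)$ as $j_{\alpha,\alpha+1}(\bar f)(\omega_1^{\mathscr{M}_\alpha})$ and applying $j_{\alpha+1,\omega_1}$ yields $M_{\alpha+1}=\{\,j_{\alpha\omega_1}(\bar f)(\delta(M_\alpha)):\bar f\in\mathscr{M}_\alpha\,\}=\Hull(M_\alpha,\delta(M_\alpha))$.

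For part (2), let $\pi_\alpha:M_\alpha\to\mathscr{M}_\alpha$ be the transitivising isomorphism and put $j_{\alpha\beta}:=\pi_\beta\circ\pi_\alpha^{-1}$; these are elementary, commute, and make the displayed squares commute. Conversely, the stipulation that $\pi_\alpha$ be the transitive collapse pins down $\mathscr{M}_\alpha$ and $\pi_\alpha$, the commutativity then pins down $j_{\alpha\beta}$, and in any generic iteration $G_\alpha=\{\bar S:\omega_1^{\mathscr{M}_\alpha}\in j_{\alpha,\alpha+1}(\bar S)\}$ is determined too — so the uniqueness clause follows once $\iterat$ is shown to be a generic iteration. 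I would define
\[ G_\alpha:=\{\,\bar S\in(\NS^+)^{\mathscr{M}_\alpha}:\delta(M_\alpha)\in\pi_\alpha^{-1}(\bar S)\,\} \]
and use the standard fact that every club of $\mathscr{H}$ in $M_\alpha$ contains $\delta(M_\alpha)$ to see that $G_\alpha$ is a filter: to get a common refinement of $\bar S,\bar T\in G_\alpha$, apply selfgenericity of $M_\alpha$ to a maximal antichain of $(\NS^+)^{\mathscr{M}_\alpha}$ sitting inside the dense set of conditions deciding both $\bar S$ and $\bar T$, and observe that the resulting member of $G_\alpha$ cannot be incompatible with either, since $\delta(M_\alpha)$ would land in a club witnessing the incompatibility. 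The same use of selfgenericity shows $G_\alpha$ meets every maximal antichain of $(\NS^+)^{\mathscr{M}_\alpha}$ lying in $\mathscr{M}_\alpha$, i.e.\ $G_\alpha$ is $\mathscr{M}_\alpha$-generic.

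The step I expect to be the main obstacle is the identification $\mathscr{M}_{\alpha+1}=\Ult(\mathscr{M}_\alpha,G_\alpha)$ with $j_{\alpha,\alpha+1}$ as the ultrapower embedding, which requires careful shuttling between $\mathscr{M}_\alpha$, $\mathscr{H}$ and $M_{\alpha+1}$ and their collapse maps. For $\mathscr{M}_\alpha$-functions $\bar f_1,\dots,\bar f_n$ on $\omega_1^{\mathscr{M}_\alpha}$, writing $g_i:=\pi_\alpha^{-1}(\bar f_i)$, the definition of $G_\alpha$ (together with the fact that it induces an ultrafilter on $\pow(\omega_1)^{\mathscr{M}_\alpha}/\NS^{\mathscr{M}_\alpha}$) gives
\[ \{\,\bar\xi:\mathscr{M}_\alpha\vDash\phi(\bar f_1(\bar\xi),\dots)\,\}\in G_\alpha \iff \mathscr{H}\vDash\phi(g_1(\delta(M_\alpha)),\dots) \iff M_{\alpha+1}\vDash\phi(g_1(\delta(M_\alpha)),\dots), \]
the last equivalence because $g_i(\delta(M_\alpha))\in\Hull(M_\alpha,\delta(M_\alpha))=M_{\alpha+1}\prec\mathscr{H}$. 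By \L o\'s this says exactly that $[\bar f]_{G_\alpha}\mapsto\pi_{\alpha+1}\bigl(\pi_\alpha^{-1}(\bar f)(\delta(M_\alpha))\bigr)$ is a well-defined elementary embedding of $\Ult(\mathscr{M}_\alpha,G_\alpha)$ \emph{onto} $\pi_{\alpha+1}[\Hull(M_\alpha,\delta(M_\alpha))]=\mathscr{M}_{\alpha+1}$, hence an isomorphism; since $\mathscr{M}_{\alpha+1}$ is transitive it is the transitivised ultrapower, and chasing a constant function identifies $j_{\alpha,\alpha+1}$ with the ultrapower embedding. The direct-limit clause at a limit $\lambda$, and the claim that the direct limit of the entire system equals $\mathscr{H}$ (the case $\lambda=\omega_1$), follow from the entirely analogous computation, now using continuity of $(M_\alpha:\alpha<\omega_1)$, respectively $\bigcup_{\alpha<\omega_1}M_\alpha=\mathscr{H}$.
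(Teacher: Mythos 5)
Your proof is correct: both directions are verified against the defining clauses, the selfgenericity and $\Hull(M_\alpha,\delta(M_\alpha))$ computations in part (1) are right, and in part (2) the filter $G_\alpha$, its genericity via selfgenericity, and the identification of $\mathscr{M}_{\alpha+1}$ with the transitivised ultrapower via the \L o\'s computation are exactly the points that need checking. The paper itself offers no argument for this lemma (it is declared ``easily checked''), and your write-up is the standard proof the authors evidently have in mind.
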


\subsection{The second uniform indiscernible} If $x^\#$ exists for every real, then there exists a proper class of ordinals which are $x$-indiscernible for every real $x,$ these are called the uniform indiscernibles. Clearly, every uncountable cardinal is a uniform indiscernible and $\omega_1$ is the minimal uniform indiscernible ordinal. The second uniform indiscernible ${\bf u}_2 \leq \omega_2$ deserves special interest because it is simultaneously the second projective ordinal $\boldsymbol{\delta}_2^1$. These projective ordinals 
\[\boldsymbol{\delta}_n^1 = \sup\{ \alpha : \alpha \text{ is the length of a } \boldsymbol{\Delta}^1_n \text{ prewellordering of } \mathbb{R}  \} \]
were introduced by Moschovakis (\cite{moschovakis}) to gauge the extent to which $\mathsf{CH}$ can fail in a projectively definable way. One can check that if the sharp of every real exists, additionally
\[ {\bf u}_2 = \sup\{ \omega_1^{+ L[x]} : x \in \mathbb{R} \}.\]
Results of respectively Martin (\cite{martin}) and Woodin (\cite[Theorem 3.17]{woodin}), give that under both $(\mathsf{ZF}+)\mathsf{AD}$ and under $\NS$ saturated + $\pow(\omega_1)^\#,$ the second uniform indiscernible ${\bf u}_2$ has the  maximal value it could have, namely ${\bf u}_2 = \omega_2.$
 We need the following lemma which is also essentially used in Woodin's argument (\cite[Theorem 3.17]{woodin}) for\[\NS \text{ saturated } + \pow(\omega_1)^\# \text{ exists }\Rightarrow\  {\bf u}_2 = \omega_2. \]
\begin{lemma}[{\cite[Lemma 3.15]{woodin}}]
	\label{lem itbounded}
	Let $\mathscr{M}$ be a countable transitive model of $\mathsf{ZFC}^\bullet$ that is iterable. Let $x$ be a real that codes both $\mathscr{M}$ and some countable ordinal $\alpha.$ Then for every generic iteration $\iterat = (\mathscr{M}_\beta, j_{\alpha\beta},G_\alpha : \alpha < \beta < \alpha +1)$ of  $\mathscr{M}$ of length $\alpha + 1,$
	\[ \rank(\mathscr{M}_{\alpha}) < \omega_1^{+ L[x]}. \]
\end{lemma}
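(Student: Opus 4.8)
The plan is to exhibit $\rank(\mathscr{M}_{\alpha})$ as a supremum of order types of wellorders belonging to a suitably definable pointset, and then to invoke a boundedness theorem. First observe that $\mathscr{M}_{\alpha}$, being the final model of a generic iteration of the countable model $\mathscr{M}$ of successor length $\alpha+1$ with $\alpha$ countable, is well-founded (this is built into the definition of a generic iteration of successor length, and is in any case ensured by iterability of $\mathscr{M}$) and is countable in $V$; hence $\rank(\mathscr{M}_{\alpha})$ is a well-defined countable ordinal. Moreover iterates of iterable models remain iterable, and since $x$ codes $\mathscr{M}$ and $\alpha$ we have a real recursive in $x$ coding $\mathscr{M}$, and $\alpha<\omega_1^{L[x]}$.

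Next I would introduce the set
\[
A \;=\; \bigl\{\, e \in \mathrm{WO} \;:\; \text{some real } r \text{ codes a generic iteration of } \mathscr{M} \text{ of length } \alpha+1 \text{ with } \otp(e) < \rank(\text{its last model}) \,\bigr\},
\]
where $\mathrm{WO}$ is the set of reals coding wellorders of $\omega$, $\otp(e)$ denotes the order type of the wellorder coded by $e$, and the code $r$ is required to carry all the models, iteration embeddings, generic filters, \emph{and} the transitive-collapsing isomorphisms realizing the generic ultrapowers and direct limits. With this bookkeeping in place, the assertion that $r$ codes such an iteration \emph{modulo well-foundedness of its models} reduces to conditions arithmetic in $(x,r)$ (the filters are $\NS^+$-generic over the relevant countable models, the recorded isomorphisms witness the correct ultrapower/direct-limit relations, each model satisfies $\mathsf{ZFC}^\bullet$, and the length is $\alpha+1$ with $\alpha$ read off $x$), and similarly "$\otp(e)<\rank(\text{last model})$" is arithmetic once an order-embedding of $(\omega,e)$ onto a proper initial segment of the ordinals of the last model is also supplied by $r$. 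The one genuinely non-arithmetic clause is well-foundedness of the last model — equivalently, via the iteration embeddings, of every model occurring — which is $\Pi^1_1(x,r)$. Thus $A\in\Sigma^1_2(x)$, and the well-foundedness clause forces $A\subseteq\mathrm{WO}$.

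Then I would bound $\sup\{\otp(e):e\in A\}$ and conclude. Here the iterability of $\mathscr{M}$ enters essentially: iterability of $\mathscr{M}$ is a $\Pi^1_2$ property of a real coding $\mathscr{M}$ (one uses that iterability for all lengths $\gamma\le\omega_1$ is equivalent to iterability for all countable $\gamma$), hence absolute between $V$ and $L[x]$, so $A$ is computed identically in $L[x]$ and in $V$ and is represented by a tree lying in $L[x]$ whose ordinal entries may be taken below $\omega_1^{L[x]}$; a Kunen–Martin-style rank computation inside $L[x]$ then gives $\sup\{\otp(e):e\in A\}<\omega_1^{+L[x]}$. Finally, a real $r_0$ coding the given iteration $\iterat$ witnesses that $A$ contains every $e\in\mathrm{WO}$ with $\otp(e)<\rank(\mathscr{M}_{\alpha})$; since $\mathscr{M}_{\alpha}\vDash\mathsf{ZFC}^-$ has no largest ordinal, $\rank(\mathscr{M}_{\alpha})$ is a limit ordinal, whence
\[
\rank(\mathscr{M}_{\alpha}) \;=\; \sup\{\,\otp(e)+1 : e\in\mathrm{WO},\ \otp(e)+1<\rank(\mathscr{M}_{\alpha})\,\} \;\le\; \sup\{\,\otp(e):e\in A\,\} \;<\; \omega_1^{+L[x]}.
\]

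The step I expect to be the main obstacle is the boundedness estimate of the previous paragraph. It is precisely the $\Pi^1_1$ well-foundedness clause that prevents $A$ from being $\Sigma^1_1(x)$ and places it in $\Sigma^1_2(x)$; a general $\Sigma^1_2(x)$ subset of $\mathrm{WO}$ need only have its order types bounded below $\mathbf{u}_2$ (and indeed $\mathrm{WO}$ itself is such a set), so obtaining the sharper bound $\omega_1^{+L[x]}$ genuinely requires exploiting that $A$ arises from generic iterations of the \emph{iterable} model $\mathscr{M}$ — this is what makes the relevant tree live inside $L[x]$ with ordinal labels below $\omega_1^{L[x]}$. Verifying this carefully, together with the routine but lengthy bookkeeping that makes the coherence conditions arithmetic, is the technical heart of the argument; for the details one may consult the proof of Lemma~3.15 in \cite{woodin}.
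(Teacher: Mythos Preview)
The paper does not supply a proof of this lemma; it is merely quoted from Woodin's book, so there is no ``paper's own proof'' to compare against.

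On your attempt: note first that, with the paper's convention $\omega_1^{+L[x]} = (\omega_1^V)^{+L[x]}$ (this is the reading forced by the identity $\mathbf{u}_2 = \sup\{\omega_1^{+L[x]} : x \in \mathbb{R}\}$ displayed just above the lemma), the statement is immediate. Since $\alpha$ and $\mathscr{M}$ are countable, an easy induction shows every model in the iteration is countable, whence $\rank(\mathscr{M}_\alpha) < \omega_1^V < \omega_1^{+L[x]}$. You yourself observe that $\mathscr{M}_\alpha$ is countable, so you are one line away from the conclusion; the boundedness machinery is not needed for the lemma as literally stated here.

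If instead you are (reasonably) aiming at the nontrivial content underlying the application to $\mathbf{u}_2$---bounding $\rank(\mathscr{M}_{\omega_1})$ for a length-$\omega_1$ iteration, where the trivial inequality is unavailable---then your sketch has the right shape but a genuine gap at exactly the step you flag. From ``iterability is $\Pi^1_2$ and hence absolute to $L[x]$'' you cannot conclude that the $\Sigma^1_2(x)$ set $A$ is computed identically in $L[x]$ and in $V$: the existential real quantifier in $A$ ranges over more reals in $V$, so a priori $A^{L[x]} \subsetneq A^V$. Nor does the bare Shoenfield representation of a $\Sigma^1_2(x)$ subset of $\mathrm{WO}$ by a tree on $\omega\times\omega_1^{L[x]}$ yield a Kunen--Martin bound of the kind you assert; as you correctly note, generic $\Sigma^1_2$ subsets of $\mathrm{WO}$ are only bounded below $\boldsymbol{\delta}^1_2$. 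The point you are missing is that iterability makes the $\Pi^1_1$ well-foundedness clause \emph{redundant}: any real satisfying your arithmetic coherence conditions already codes a genuine iteration, since well-foundedness propagates by precipitousness at successor stages and by iterability at limit stages. Thus $A$ is actually $\Sigma^1_1(x)$, and the relevant tree lives in $L[x]$ on $\omega$ alone. That reduction is the mechanism by which iterability sharpens the bound; your write-up asserts the sharpened bound but does not isolate this step.
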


Finally, we will also make use of the well known fact that existence of sharps of all (transitive) sets in a fragment of the universe is not destroyed by small forcing. See e.g.\ \cite[Lemma 3.4]{Schlicht} for a much finer result of this type, for the present purposes the following lemma suffices.

\begin{lemma} \label{lem sharps in extension}
Let $\kappa$ be a cardinal and suppose that $H_{\kappa^+}^\#$ exists. Then for every forcing $\mathbb{P}$ of size at most $\kappa$ and every filter $G$ that is $\mathbb{P}$-generic over $V,$
\[V[G] \vDash X^\# \text{ exists for every transitive set } X \in H_{\kappa^+}.\]
\end{lemma}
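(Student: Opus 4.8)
The plan is to deduce this from the well-known fact that small forcing does not kill nontrivial elementary self-embeddings of the relevant models $L[a]$, together with two routine facts about sharps: that nice names for small sets live in $H_{\kappa^+}$, and that if $Z$ is a set whose sharp exists, then $W^\#$ exists for every transitive $W \in L[Z]$. Write $N := H_{\kappa^+}^V$; after replacing $\mathbb{P}$ by an isomorphic forcing with underlying set an ordinal $\leq \kappa$, we may assume $\mathbb{P} \in N$, so that $N^\#$ exists in $V$ by hypothesis and $\mathbb{P}$ is $\kappa^+$-c.c. Fix a set $c$ with $L[c] = L[N, G]$; since $\mathbb{P} \in N \subseteq L[N]$ and $G$ is $\mathbb{P}$-generic over the inner model $L[N]$, one checks that $L[c] = L[N][G]$.

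The main step is to show $c^\#$ exists in $V[G]$. Working in $V$, pick a Silver indiscernible $\mu$ of the structure $(L[N]; \in, N)$ with $\mu > \rank(N)$; by the basic theory of $N^\#$, there is a nontrivial elementary $j \colon (L[N]; \in, N) \to (L[N]; \in, N)$ with critical point $\mu$, and $j$ is the identity on $V_\mu^{L[N]}$. Since $\rank(\mathbb{P}) < \rank(N) < \mu$, we have $j(\mathbb{P}) = \mathbb{P}$ and $j \restriction \mathbb{P} = \mathrm{id}$, so $j[G] = G$ is a filter on $j(\mathbb{P})$ which is $\mathbb{P}$-generic over $L[N]$. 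By the standard lifting lemma, $j$ extends to a nontrivial elementary $\hat j \colon L[N][G] \to L[N][G]$, given by $\hat j(\tau^G) = j(\tau)^G$. This $\hat j$ lies in $V[G]$, agrees with $j$ on the ordinals (so still has critical point $\mu$), and — being the identity on $V_\mu^{L[N][G]}$ — fixes $N$ and $G$, hence $c$. So $V[G]$ contains a nontrivial elementary self-embedding of $L[c]$ fixing $c$, and therefore $c^\#$ exists in $V[G]$ by (the relativised form of) Kunen's theorem.

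Finally, let $X \in H_{\kappa^+}^{V[G]}$ be transitive. Then $|\trcl(\{X\})| \leq \kappa$ in $V[G]$, so there is in $V[G]$ a subset $A \subseteq \kappa$ with $X \in L[A]$ — e.g.\ a code for a well-founded extensional relation on $\kappa$ whose transitive collapse is $\trcl(\{X\})$. Let $\dot A$ be a nice $\mathbb{P}$-name for $A$; since any nice name for a subset of $\kappa$ has transitive closure of size at most $\kappa$, we get $\dot A \in H_{\kappa^+}^V = N$, whence $A = \dot A^G \in L[\dot A, G] \subseteq L[N, G] = L[c]$, so $X \in L[A] \subseteq L[c]$. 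As $c^\#$ exists in $V[G]$ and $X$ is a transitive set in $L[c]$, the usual transfer of sharps to inner submodels yields $X^\# \in V[G]$. I expect the only genuinely delicate point to be the lifting step in the second paragraph, but this is routine precisely because $j$ restricts to the identity on the rank-$<\mu$ poset $\mathbb{P}$; a substantially finer version of the whole statement can be found in \cite[Lemma~3.4]{Schlicht}.
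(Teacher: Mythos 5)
The paper does not actually prove this lemma --- it is stated as a well-known fact, with a pointer to \cite[Lemma 3.4]{Schlicht} for a sharper version --- so there is no in-paper argument to measure yours against; you have supplied the standard proof, and it is correct. The skeleton is sound: lift an indiscernible-shifting embedding $j \colon L[N] \to L[N]$ (with $N = H_{\kappa^+}^V$ and $\operatorname{crit}(j) = \mu > \rank(N)$) through the small forcing to a nontrivial elementary self-embedding of $L[N][G]$ fixing $N$ and $G$, conclude via the relativised Kunen criterion that $c^\#$ exists in $V[G]$ for a set $c$ coding $(N,G)$, and then transfer the sharp to any transitive $X \in H_{\kappa^+}^{V[G]}$ by coding $X$ by some $A \subseteq \kappa$ whose nice name lies in $N$ (here the $\kappa^+$-c.c.\ also guarantees that $H_{\kappa^+}$ is computed with the same $\kappa^+$ in $V$ and $V[G]$). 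The individual ingredients --- nice names for subsets of $\kappa$ have hereditary size $\leq \kappa$; an embedding generated by shifting only indiscernibles $\geq \mu$ satisfies $j \upharpoonright V_\mu^{L[N]} = \mathrm{id}$; sharps pass down to transitive members of $L[c]$ --- are all standard and correctly deployed. The one point you should make explicit is the convention behind $X^\#$ for a transitive set $X$: you need $L[N]$ to denote the relativised hierarchy built \emph{over} $\trcl(\{N\})$ (equivalently, replace $N$ by a set of ordinals coding it), so that $\mathbb{P}$, the nice names $\dot A$, and all elements of $\trcl(N)$ genuinely belong to $L[N]$ and are fixed pointwise by $j$. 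Under the bare ``$N$ as a predicate'' reading of $L[N]$ this can fail badly (compare $L[\mathbb{R}] = L$) and several of your steps would collapse; under the intended reading, which is the only one that makes the hypothesis ``$H_{\kappa^+}^\#$ exists'' carry its usual strength, your argument is complete.
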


\section{The strongly ssp property}\label{S.strong ssp}

\begin{notation}
	For a set $X$ and a cardinal $\theta,$ we write $X \ll \theta$ to mean that \[\pow(\pow(X)) \in H_\theta.\]
\end{notation}

\begin{definition}
	Given a $\bbP$-condition $p^*,$ the condition $p^*$ is called $(M, \mathbb{P})$-\emph{semigeneric} if for every $q\leq p^*$ and every maximal antichain $\mathcal{A} \subseteq \bbP$ with $\mathcal{A} \in M,$ there exists some $r \in \mathcal{A}$ such that
	\[ r \parallel q \text{ and } r \parallel_{\omega_1} M.\] Equivalently, $p^*$ is $(M, \mathbb{P})$-semigeneric iff for every $\mathbb{P}$-name $\tau \in M$ for a countable ordinal, 
	\[ p^\ast \Vdash \tau < \delta(M). \]
	The condition $p^*$ is called \emph{strongly $(M, \mathbb{P})$-semigeneric} if for every $q\leq p^*$ there exists $\trace(q | M) \in \bbP$ such that $\trace(q | M) \parallel_{\omega_1} M $ and for every $r \in \Hull(M,\trace(q | M)) \cap \bbP,$ 
	\[r \leq \trace (q | M)\Rightarrow r \parallel q. \]
	$\mathbb{P}$ is said to be \emph{(strongly) semiproper} with respect to $M\prec H_\theta$ if for every $p\in \mathbb{P}\cap M,$ there exists $p^* \leq p$ such that $p^*$ is (strongly) $(M, \mathbb{P})$-semigeneric.
\end{definition}

Recall that a set $X \subseteq [H_\theta]^\omega$ is \emph{projective stationary} if for every stationary subset $S \subseteq \omega_1$ the set $\{M \in X : M \cap \omega_1 \in S\}$ is stationary in $[H_\theta]^\omega.$

\begin{lemma}
	If $\mathbb{P}$ is strongly semiproper with respect to $M\prec H_\theta,$ then $\mathbb{P}$ is semiproper with respect to $M\prec H_\theta.$
	If $\mathbb{P}$ is semiproper with respect to projective stationary many $M \prec H_\theta,$ then  $\mathbb{P}$ is ssp.
\end{lemma}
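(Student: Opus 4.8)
The plan is to prove the two assertions separately; both come out of the definitions by routine arguments. For the first assertion I would fix $M\prec H_\theta$, a strongly $(M,\mathbb{P})$-semigeneric condition $p^{*}$, and verify the antichain form of $(M,\mathbb{P})$-semigenericity. So fix $q\le p^{*}$ and a maximal antichain $\mathcal{A}\subseteq\mathbb{P}$ with $\mathcal{A}\in M$ (we assume, as is standard, that $\mathbb{P}\in M$ too), put $t:=\trace(q\mid M)$ and $N:=\Hull(M,t)$. By the definition of strong semigenericity, $t\parallel_{\omega_{1}}M$, which is precisely the statement $N\cap\omega_{1}=\delta(M)$. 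Since $\mathbb{P},\mathcal{A}\in M\subseteq N$ and $t\in\mathbb{P}\cap N$, elementarity of $N\prec H_{\theta}$ together with maximality of $\mathcal{A}$ gives some $r\in\mathcal{A}\cap N$ and some $s\in\mathbb{P}\cap N$ with $s\le r$ and $s\le t$. As $s\in\Hull(M,t)\cap\mathbb{P}$ with $s\le t=\trace(q\mid M)$, the defining property of $\trace(q\mid M)$ yields $s\parallel q$, and a common extension of $s$ and $q$ also extends $r$, so $r\parallel q$. Finally $r\in N$ forces $\Hull(M,r)\subseteq N$, whence $\Hull(M,r)\cap\omega_{1}\subseteq N\cap\omega_{1}=\delta(M)\subseteq\Hull(M,r)\cap\omega_{1}$, i.e.\ $r\parallel_{\omega_{1}}M$. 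This $r$ witnesses the required instance of $(M,\mathbb{P})$-semigenericity.

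For the second assertion I would run the classical ``a single good model suffices'' argument. Let $X\subseteq[H_{\theta}]^{\omega}$ be the projective stationary set of models with respect to which $\mathbb{P}$ is semiproper, and assume (as is implicit in the statement) that $\theta$ is large enough that $\mathbb{P}$ together with all nice $\mathbb{P}$-names for subsets of $\omega_{1}$ lie in $H_{\theta}$. Given a stationary $S\subseteq\omega_{1}$, a condition $p\in\mathbb{P}$, and a $\mathbb{P}$-name $\dot C$ with $p\Vdash\text{``}\dot C$ is a club in $\omega_{1}\text{''}$, I would first replace $\dot C$ by a nice name, so $\dot C\in H_{\theta}$; then, using that $\{M\prec H_{\theta}:\{\mathbb{P},p,\dot C\}\subseteq M\}$ is club in $[H_{\theta}]^{\omega}$ and that $X$ is projective stationary, pick $M\in X$ with $\{\mathbb{P},p,\dot C\}\subseteq M$ and $\delta(M)=M\cap\omega_{1}\in S$. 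Since $p\in\mathbb{P}\cap M$, semiproperness of $\mathbb{P}$ with respect to $M$ gives $p^{*}\le p$ that is $(M,\mathbb{P})$-semigeneric.

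The crux is then the claim that $p^{*}\Vdash\check{\delta(M)}\in\dot C$; granting it, $p^{*}\le p$ forces $\check{\delta(M)}\in\dot C\cap\check S$, so $p$ cannot force $\dot C\cap\check S=\emptyset$, and since $p$, $S$, $\dot C$ were arbitrary $\mathbb{P}$ preserves stationary subsets of $\omega_{1}$, i.e.\ $\mathbb{P}$ is ssp. To prove the claim, take $G\ni p^{*}$ generic; then $C:=\dot C^{G}$ is a club in $\omega_{1}$, so it suffices to show $C\cap\delta(M)$ is unbounded in $\delta(M)$. For $\beta<\delta(M)$ we have $\beta\in M$; let $\tau\in M$ be a $\mathbb{P}$-name for a countable ordinal with $p\Vdash\tau=\min(\dot C\setminus(\beta+1))$ (definable from $\dot C,\beta$, hence in $M$). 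By the name-theoretic formulation of $(M,\mathbb{P})$-semigenericity, $p^{*}\Vdash\tau<\delta(M)$; and since $p$ forces $\dot C$ unbounded, $p^{*}\le p$ forces $\tau\in\dot C$ and $\tau>\beta$. Hence $\tau^{G}\in C$ with $\beta<\tau^{G}<\delta(M)$, so $C\cap\delta(M)$ is unbounded, and by closure $\delta(M)\in C$, as claimed.

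Essentially everything above is standard; the only genuinely content-bearing step is the first assertion, and the (minor) obstacle there is the bookkeeping inside $\Hull(M,\trace(q\mid M))$: one must see that this single model, which by strong semigenericity is already $\parallel_{\omega_{1}}$-correct over $M$, captures a member of the arbitrarily prescribed maximal antichain $\mathcal{A}\in M$ which, through $\trace(q\mid M)$, stays compatible with $q$.
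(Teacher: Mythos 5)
Your proposal is correct and follows essentially the same route as the paper: the paper dismisses the first implication as immediate (your argument via $N=\Hull(M,\trace(q\mid M))$, finding $r\in\mathcal{A}\cap N$ and a common extension $s\le r,\trace(q\mid M)$ inside $N$, is the standard unwinding), and your second argument is exactly the paper's, namely picking $M$ in the projective stationary set with $M\cap\omega_1\in S$ and using semigenericity to force $\delta(M)\in\dot C\cap S$.
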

\begin{proof}
	The first statement is clear because every condition that is strongly $(M, \mathbb{P})$-semi\-generic is also $(M, \mathbb{P})$-semigeneric. To prove the second statement, let be given a stationary set $S\subseteq \omega_1,$ a name for a club $\dot{C} \in H_\theta$ and an arbitrary $p\in \mathbb{P}.$ By assumption there exists $M\prec H_\theta$ such that $\bbP$ is semiproper with respect to $M$ and such that $p,\dot{C} \in M$ and $M\cap \omega_1 \in S.$ By semiproperness there exists $p^* \leq p $ that is $(M, \mathbb{P})$-semigeneric. Then $p^*$ forces that $M[G] \cap \omega_1 = M\cap \omega_1$ and hence also that $M\cap \omega_1 \in \dot{C}.$ 
\end{proof}

In fact (see \cite[Lemma 4.8]{fengjechzap}), a forcing is ssp if and only if for every sufficiently large regular cardinal $\theta,$ there exist projective stationary many countable $M \prec H_\theta$ such that $\bbP$ is semiproper with respect to $M.$ Replacing here semiproper by \emph{strongly} semiproper results in the following stronger notion.

\begin{definition}
	A forcing $\mathbb{P}$ is \emph{strongly ssp} if for every regular cardinal $\theta$ which is sufficiently large ($\theta \gg \bbP$), there exist projective stationary many countable $M \prec H_\theta$ such that $\mathbb{P}$ is strongly semiproper with respect to $M.$
\end{definition}

The following lemma makes apparent that even ccc forcings and countably closed forcings can fail to be strongly ssp.

\begin{lemma}
	If $\mathbb{P}$ is strongly ssp, then  $\mathbb{P}$ does not add any fresh functions $\omega_1 \to V.$
\end{lemma}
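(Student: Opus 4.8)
I would argue by contradiction. Suppose $\mathbb{P}$ is strongly ssp and fix $p\in\mathbb{P}$ together with a $\mathbb{P}$-name $\dot f$ such that $p$ forces ``$\dot f\colon\omega_1\to\check V$ is a function, $\dot f\restriction\check\alpha\in\check V$ for every $\alpha<\omega_1$, yet $\dot f\neq\check g$ for every $g\in\check V$''. A routine preliminary reduction lets us assume there is an ordinal $\mu$ with $p\Vdash\ran(\dot f)\subseteq\check V_\mu$: for a fixed $q\le p$ the possible values of $\dot f\restriction\check\alpha$ below $q$ form a set, so their ranks are bounded, and taking a supremum over the set-many $q\le p$ bounds them uniformly. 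We then fix a regular $\theta\gg\mathbb{P}$ with $\dot f,p,\mu,\mathbb{P}\in H_\theta$, so that both ``$\dot f$ is fresh'' and ``proper initial segments of $\dot f$ are decided densely below $p$'' become first-order expressible over $H_\theta$.

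By strong ssp the countable $M\prec H_\theta$ for which $\mathbb{P}$ is strongly semiproper form a projective stationary, hence in particular stationary, family; reflecting this fact inside a member of it, I would choose $M_0\in M_1\prec H_\theta$ both belonging to this family, with $p,\dot f,\mu,\mathbb{P}\in M_0$. Then automatically $M_0\subseteq M_1$ and $\delta_0:=M_0\cap\omega_1<\delta_1:=M_1\cap\omega_1$ with $\delta_0\in M_1$, and (since ``$M_0$ is good'' is first order over $H_\theta$ with parameters in $M_1$) also $M_1\models$``$\mathbb{P}$ is strongly semiproper with respect to $M_0$''. Working inside $M_1$ I would pick $p^{\ast\ast}\in M_1$ below $p$ that is strongly $(M_0,\mathbb{P})$-semigeneric, and then $p^{\ast}\le p^{\ast\ast}$ that is strongly $(M_1,\mathbb{P})$-semigeneric; since strong semigenericity is inherited downward, $p^{\ast}$ is strongly $(M_i,\mathbb{P})$-semigeneric for $i=0,1$.

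The engine of the argument is the following ``catching'' observation, in which the clause $\trace(q\mid N)\parallel_{\omega_1}N$ is used essentially: for $N\in\{M_0,M_1\}$, if $\dot x\in N$ is a $\mathbb{P}$-name that $p$ forces to take a value in $\check V$ and $q\le p^{\ast}$ forces $\dot x=\check y$, then $y\in\Hull(N,\trace(q\mid N))$, a countable $\prec H_\theta$ whose intersection with $\omega_1$ is exactly $\delta(N)$. Indeed, the dense-below-$\trace(q\mid N)$ set of conditions deciding $\dot x$ is definable over $\Hull(N,\trace(q\mid N))$, hence lies in it, so there is $r$ in this model below $\trace(q\mid N)$ deciding $\dot x$; by strong $(N,\mathbb{P})$-semigenericity $r\parallel q$, whence $r$ decides $\dot x$ to the value $y$, and elementarity places $y$ in $\Hull(N,\trace(q\mid N))$. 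Now fix, by density below $p^{\ast}$, a condition $q\le p^{\ast}$ with $q\Vdash\dot f\restriction\check{\delta_1}=\check{g_1}$, and set $g_0:=g_1\restriction\delta_0$, so $q\Vdash\dot f\restriction\check{\delta_0}=\check{g_0}$. Applying the observation with $N=M_1$ and $\dot x=\dot f\restriction\check{\delta_0}$ (a legitimate name in $M_1$, since $\delta_0\in M_1$) puts $g_0$ into $M_1':=\Hull(M_1,\trace(q\mid M_1))$; applying it with $N=M_0$ and $\dot x=\dot f\restriction\check\beta$ for $\beta<\delta_0$ puts every $g_0\restriction\beta$ into $M_0':=\Hull(M_0,\trace(q\mid M_0))$, where $M_0'\cap\omega_1=\delta_0$; and since any $r\in M_0'\cap\mathbb{P}$ below $\trace(q\mid M_0)$ that decides $\dot f\restriction\check\beta$ must decide it to $g_0\restriction\beta$ (because $r\parallel q$ and $q$ already decides it), the family $\{\,g_0\restriction\beta:\beta<\delta_0\,\}$ of proper initial segments of $g_0$ is precisely the ``decision data'' carried by $M_0'$. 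The plan is to turn this into a contradiction by promoting $g_0$ itself — a function whose domain is the ordinal $\delta_0$ — into a model whose intersection with $\omega_1$ is exactly $\delta_0$, which is impossible; concretely, after arranging (by the standard manipulation of choosing $q$ to extend residues of both models, so that $\trace(q\mid M_0)\in M_1'$ and hence $M_0'\in M_1'$) that $M_1'$ sees $M_0'$, $\delta_0$, $g_0$, the freshness of $\dot f$, and that $\mathbb{P}$ is strongly semiproper with respect to $M_0$, one re-runs the catching analysis \emph{inside} $M_1'$, where $M_0'$ is an honest set over which one may quantify, so the decision data can be reassembled and forces $g_0\in M_0'$, contradicting $M_0'\cap\omega_1=\delta_0$.

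I expect this final promotion step to be the main obstacle, and it is genuinely delicate. Because strong semigenericity allows $\trace(q\mid N)\notin N$, the model $N$ need not be a complete subforcing and $N[G]$ need not be a generic extension of $N$, so the catching observation only ever places the approximations of $\dot f$ in the \emph{enlarged} hull $\Hull(N,\trace(q\mid N))$ rather than in $N$ itself; in particular the naive move — apply catching with $N=M_0$ to $\dot f\restriction\check{\delta_0}$ to force $\delta_0$ into a hull that cannot contain it — fails outright, since $\delta_0\notin M_0$ and so $\dot f\restriction\check{\delta_0}\notin M_0$. The whole purpose of the two-model configuration $M_0\in M_1$ is to make $\delta_0$ visible (so that $\dot f\restriction\check{\delta_0}$ is a bona fide name for $M_1$) while keeping $M_0'$, once it is an element of $M_1'$, treatable as an ordinary set — which is exactly what is needed to convert ``$M_0'$ contains every proper initial segment of $g_0$'' into ``$M_0'$ contains $g_0$''. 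Securing $\trace(q\mid M_0)\in M_1'$ by a compatible choice of $q$, and verifying that the reflected argument really does reconstruct $g_0$ (possibly at the cost of a further model or an inductive refinement), are the points requiring the most care.
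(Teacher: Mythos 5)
There is a genuine gap: your argument is not closed, and the closing step you sketch is both the hard part and, as outlined, not the right move. Your ``catching'' observation is exactly the correct mechanism (it is the same elementarity-plus-compatibility argument the paper uses), but you then aim at the wrong contradiction. From ``$M_0'=\Hull(M_0,\trace(q\mid M_0))$ contains every proper initial segment $g_0\restriction\beta$, $\beta<\delta_0$'' one cannot conclude $g_0\in M_0'$ --- indeed $g_0\in M_0'$ is outright impossible since $\dom(g_0)=\delta_0\notin M_0'$, and no amount of ``reassembling decision data inside $M_1'$'' will produce it: the family $\{g_0\restriction\beta:\beta<\delta_0\}$ is a subset of $M_0'$ of order type $\delta_0$, not an element of it, and $M_1'$ seeing $M_0'$ as a set does not change this. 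So the promotion step is not merely delicate; the contradiction you are steering toward is unreachable by this route, and the proof as written does not terminate.

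The missing idea is to reflect the \emph{failure of decision} downward rather than to push the \emph{values} upward, and for this a single model suffices. Take one good $M\ni p,\dot f,\mathbb{P}$, a strongly $(M,\mathbb{P})$-semigeneric $p^*\le p$, and $q\le p^*$ deciding $\dot f\restriction\delta(M)$ to be some ground-model $g$ (this needs no name for $\dot f\restriction\delta(M)$ inside $M$: you choose $q$ in $V$, using only that initial segments of $\dot f$ are forced into $V$). Now claim $\trace(q\mid M)$ decides $\dot f(\alpha)$ for \emph{every} $\alpha<\omega_1$: if not, the statement ``there exist $\alpha<\omega_1$, $r_1,r_2\le\trace(q\mid M)$ and $x\neq y$ with $r_i\Vdash\dot f(\alpha)=x,y$'' holds in $H_\theta$ with parameters $\dot f,\mathbb{P},\trace(q\mid M)\in\Hull(M,\trace(q\mid M))$, so witnesses exist in $\Hull(M,\trace(q\mid M))$; but then $\alpha\in\Hull(M,\trace(q\mid M))\cap\omega_1=\delta(M)$, and $r_1\parallel q$, $r_2\parallel q$ force $x=g(\alpha)=y$, a contradiction. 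Hence a common extension of $q$ and $\trace(q\mid M)$ (they are compatible, as $\trace(q\mid M)$ itself lies in the hull) forces $\dot f$ equal to a ground-model function, so $\dot f$ is not fresh below $p$. Your two-model configuration, the reduction to $V_\mu$, and the whole $M_0\in M_1$ apparatus are unnecessary.
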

\begin{proof}
Let $\dot{f}$ be a $\bbP$-name and suppose $p \in \bbP$ such that
\[p \Vdash \dot{f} : \omega_1 \to V \quad \text{and} \quad (\forall \alpha < \omega_1)\, \dot{f} \upharpoonright \alpha \in V.\]
Choose a sufficiently large regular $\theta$ and let $M\prec H_\theta$ countable such that
$\mathbb{P}, \dot{f}, p \in M$ and such that $\mathbb{P}$ is strongly semiproper with respect to $M.$ Then pick $p^* \leq p$ which is strongly $(M, \bbP)$-semigeneric and also pick $q \leq p^*$ and $g : M \cap \omega_1 \to V$ such that
\[ q \Vdash \dot{f} \upharpoonright M \cap \omega_1 = g.  \]
 We claim that $\trace(q | M)$ decides $\dot{f}.$\\ Suppose not, then there exist $r_1, r_2 \in \bbP\, \cap\, \Hull(M, \trace(q | M)),$ $\alpha \in M \cap \omega_1$ and $x \neq y$ such that
 \[ r_1,r_2 \leq \trace(q | M), \qquad r_1 \Vdash \dot{f}(\alpha) = x, \qquad r_2 \Vdash \dot{f}(\alpha) = y. \]
 This contradicts $r_1 \parallel q$, $r_2 \parallel q$ and $q\Vdash \dot{f}(\alpha) = g(\alpha).$
\end{proof}

As a consequence, a strongly ssp forcing adds no new uncountable branches to a ground-model tree of height $\omega_1.$
\\

\begin{remark}
Of course, the strongly ssp property is inspired by Mitchell's (\cite{mitchell}) notion of strong properness. A forcing $\mathbb{P} \in H_\theta$ is strongly proper with respect to $M \prec H_\theta$ if for every $p \in M \cap \mathbb{P},$ there exists $p^* \leq p$ that is strongly $(M, \mathbb{P})$-generic.   $p^*$ being strongly $(M, \mathbb{P})$-generic means that for every $q\leq p^*,$ there exists $\trace (q | M) \in \mathbb{P} \cap M$ such that
\[(\forall r \in \mathbb{P} \cap M)\ r \leq \trace(q | M) \Rightarrow r \parallel q.\]
$\mathbb{P}$ is strongly proper if for every regular $\theta \gg \bbP$ there exist club many countable $M \prec H_\theta$ such that $\mathbb{P}$ is strongly proper with respect to $M.$
$\,$\\
Unsurprisingly, it is easy to check that every strongly proper forcing is strongly ssp. 
\end{remark}

\section{Games: catching and capturing}
In this section three types of games will be introduced,
\begin{itemize}
	\item the antichain catching games $\mathbf{G}^{cat}_M,$
	\item the capturing games $\mathbf{G}^{cap}_M(X),$
	\item and the catching-capturing games
	$\mathbf{G}^{c\text{-}c}_M(X).$
\end{itemize}

All three types of games will be played by two players $P1$ (\emph{the 
		Challenger}) and $P2$ (\emph{the Constructor}). $P1$ and $P2$ thank their nicknames to the fact that the moves of $P2$ are used to construct a growing sequence $(M_n:n<\omega)$ of countable elementary submodels of some $H_\theta,$ while the moves of $P1$ can be thought of as challenges for $P2$ to make the models $M_n$ satisfy certain conditions. All games considered have length $\omega.$ If $\mathbf{G}$ is one of these games, we write $P1 \nearrow \mathbf{G}$ for the statement that $P1$ has a winning strategy for the game $\mathbf{G}$ (and likewise for $P2 \nearrow \mathbf{G}$).  
	
	We proceed straight to the definitions. 	
\begin{definition}[The antichain catching game]
	Let $\theta$ be a regular cardinal and suppose $\pow(\omega_1) \in H_\theta.$ Let also $M \prec H_\theta$ be countable. The \emph{antichain catching game} $\mathbf{G}^{cat}_M$ for $M$ is defined as follows.	
Define $M_0 = M.$ In round $n$ of the game, \begin{itemize}
		\item $P1$ plays some maximal antichain $\mathcal{A}_n\subseteq \NS^+$ 
		with $\mathcal{A}_n\in M_n$,
		\item $P2$ has to play some $ S_n\in\mathcal{A}_n$ such that $M_n \cap \omega_1 \in S_n$ and $S_n \parallel_{\omega_1} M_n$,
		\item define $ M_{n+1}=\Hull(M_n,S_n)$.
	\end{itemize}
Thus, a play of the game $\mathbf{G}^{cat}_M$ can be pictured as follows.
	\medskip
	\begin{center} 
		$\begin{array}{lcccccc}
			\phantom{P1} & M_0 &\stackrel{+S_0}{\rightsquigarrow}& M_1 &\stackrel{+S_1}{\rightsquigarrow}&M_{2} &{\ldots}\\
			
			\phantom{P2} & \vin&\phantom{S_0}&\vin&\phantom{f_1}&\vin&\phantom{\ldots}
		\end{array}$\\
		
		$ \begin{array}{l|cccccc}
			P1 & \mathcal{A}_0 &\phantom{\stackrel{+S_0}{\rightsquigarrow}}& \phantom{i}\mathcal{A}_1 &\phantom{\stackrel{+f_1}{\rightsquigarrow}}&\phantom{i}\mathcal{A}_{2} &\ldots\\
			\hline\\ [-0.7em]
			P2 & &S_0&\phantom{nn}&S_{1}&&\ldots
		\end{array}$
	\end{center}
\smallskip

\noindent
		A winner is assigned as decided by the following rules.
		\begin{itemize}
			\item[] If there is a player who first disobeys the rules, this player loses,

\item[] else, 
$P2$ wins.
\end{itemize}
\end{definition}

The catching game is clearly just one of the many variations on the classic theme of antichain catching and it is easy to observe that $P1$ has a winning strategy for the catching game $\mathbf{G}^{cat}_M$ if and only if there exists some countable selfgeneric $N \prec H_\theta$ such that $M \subseteq N$ and $M\cap \omega_1 = N\cap \omega_1.$ 

\begin{definition}[The capturing game]
Let $\theta$ be a regular cardinal and $X \in H_\theta$. Suppose $M \prec H_\theta$ is countable and $X\in M$. The $X$-\emph{capturing game} $\mathbf{G}^{cap}_M(X)$ for $M$ is defined as follows.
Define $M_0=M$.	
In round $n$ of the game, \begin{itemize}
	\item $P1$ plays some $x_n\in X$ ($x_n$ not necessarily belonging to $M_n$),
	\item $P2$ then has to play some function $f_n:\omega_1 \to X$, with $f_n(M_n\cap\omega_1)=x_n$ and $f_n \parallel_{\omega_1} M_n$,
	\item define $ M_{n+1}=\Hull(M_n,f_n)$.
\end{itemize}
Thus, a play of the game $\mathbf{G}^{cap}_M(X)$ can be pictured as follows.
\medskip
\begin{center} 
	$\begin{array}{lcccccc}
		\phantom{P1} & M_0 &\stackrel{+f_0}{\rightsquigarrow}& M_1 &\stackrel{+f_1}{\rightsquigarrow}&M_{2} &{\ldots}\\
		
	\end{array}$\\
	
	$ \begin{array}{l|cccccc}
		P1 & x_0 &\phantom{\stackrel{+f_0}{\rightsquigarrow}}& \phantom{i}x_1 &\phantom{\stackrel{+f_1}{\rightsquigarrow}}&\phantom{i}x_{2} &\ldots\\
		\hline\\ [-0.7em]
		P2 & &f_0&\phantom{nn}&f_{1}&&\ldots
	\end{array}$
\end{center}
\smallskip

\noindent
The winner is again assigned in accordance with the following same rules.
\begin{itemize}
	\item[] If there is a player who first disobeys the rules, this player loses,
	\item[] else, 
	$P2$ wins.
\end{itemize}
\end{definition}

Finally, the catching-capturing game $\mathbf{G}^{c\text{-}c}_M(X)$ is simply a mix of the two above games $\mathbf{G}^{cat}_M$ and $\mathbf{G}^{cap}_M(X)$. 

\begin{definition}[The catching - capturing game]
	Let $\theta$ be a regular cardinal with\linebreak $\pow(\omega_1) \in H_\theta$. Suppose $M \prec H_\theta$ is countable and $X\in M$.
	The \emph{catching-capturing game} $\mathbf{G}^{c\text{-}c}_M(X)$ for $M$ is defined as follows.
	In round $n,$ player $P1$ now has two options:
	\begin{itemize}
		\item either $P1$ makes a move $ \mathcal{A}_n \in M_n$ as in the game  $\mathbf{G}^{cat}_M$, 
		\begin{itemize}			
			\item the player $P2$ then has to answer with some $S_n,$ respecting the same restrictions as in the game $\mathbf{G}^{cat}_M$,
			\item define $ M_{n+1}=\Hull(M_n,S_n)$.
		\end{itemize}
		\item or $P1$ decides to play some $x_n \in X$, as in the game $\mathbf{G}^{cap}_M(X)$.
		\begin{itemize}
			\item $P2$ then has to answer with some function $f_n: \omega_1 \to X$, respecting the same restrictions as in the game $\mathbf{G}^{cap}_M(X)$,
			\item define $ M_{n+1}=\Hull(M_n,f_n)$.
		\end{itemize}
	\end{itemize}
	
	\medskip
	
	A play of the game $\mathbf{G}^{c\text{-}c}_M(X)$ can be pictured as follows.
	
	\begin{center}
		
		$\begin{array}{lcccccccc}
			\phantom{P1} & M_0 &\stackrel{+S_0}{\rightsquigarrow}& M_1 &\stackrel{+f_1}{\rightsquigarrow}&M_{2}&\stackrel{+S_2}{\rightsquigarrow}& M_{3} &\phantom{\ldots}\\
			
			\phantom{P2} & \vin&\phantom{S_0}&&\phantom{f_1}&\vin&\phantom{S_{2}}&&\phantom{\ldots}
		\end{array}$\\
		
		$\begin{array}{l|cccccccc}
			P1 & \mathcal{A}_0 &\phantom{\stackrel{+S_0}{\rightsquigarrow}}& \phantom{n}x_1 &\phantom{\stackrel{+f_1}{\rightsquigarrow}}&\mathcal{A}_{2}&\phantom{\stackrel{+S_0}{\rightsquigarrow}}& \phantom{n}x_{3} &\ldots\\
			\hline\\ [-0.7em]
			P2 & &S_0&&f_1&\phantom{nn}&S_{2}&&\ldots
		\end{array}$
		
	\end{center}

The winning conditions remain unaltered:
\begin{itemize}
	\item[] If there is a player who first disobeys the rules, this player loses,
	\item[] else,
	$P2$ wins.
	\end{itemize} 
	
\end{definition}

Note that the games $\mathbf{G}^{cat}_M, \mathbf{G}^{cap}_M(X),\mathbf{G}^{c\text{-}c}_M(X)$ are all determined, the winning conditions for $P2$ being closed. We will allow countable models $M \prec H_\theta$ to occur as side conditions in the forcing $\theforcingg$ only if $P2 \nearrow \mathbf{G}^{c\text{-}c}_M(X)$ for a sufficiently large set $X \in H_\theta.$ Thus it will be crucial for this approach that sufficiently many models exist for which $P2$ has a winning strategy in $\mathbf{G}^{c\text{-}c}_M(X)$. If the non-stationary ideal on $\omega_1$ is precipitous, this is warranted by the following proposition.

\begin{proposition}\label{prop projstatmany}
Suppose $\theta \gg \omega_1$ is a regular cardinal and $X \in H_\theta$. The following are all equivalent.
\vspace{0.5em}
		\begin{enumerate}[label=(\alph*)]
		 \setlength\itemsep{0.5em}
			\item there exist projective stationary many countable $M\prec H_\theta$ for which $P2 \nearrow \mathbf{G}^{c\text{-}c}_M(X),$
			\item there exist projective stationary many countable $M\prec H_\theta$ for which $P2 \nearrow \mathbf{G}^{cat}_M$,
			\item there exist projective stationary many countable selfgeneric $M\prec H_\theta,$
			\item $\NS$ is precipitous.
\end{enumerate}
\end{proposition}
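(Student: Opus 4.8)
The plan is to establish the cycle of implications $(a) \Rightarrow (b) \Rightarrow (c) \Rightarrow (d) \Rightarrow (a)$. The implication $(a) \Rightarrow (b)$ is essentially trivial: a winning strategy for $P2$ in $\mathbf{G}^{c\text{-}c}_M(X)$ restricts to one in $\mathbf{G}^{cat}_M$, since the catching game is the sub-game in which $P1$ only ever exercises the antichain option; so the witnessing family for $(a)$ works verbatim for $(b)$. For $(b) \Rightarrow (c)$, I would argue that if $P2$ has a winning strategy $\sigma$ in $\mathbf{G}^{cat}_M$, then the model $N = \bigcup_n M_n$ obtained from \emph{any} play in which $P1$ enumerates (using a bookkeeping argument, possible since each $M_n$ is countable and $P1$ may revisit) all maximal antichains of $\NS^+$ that appear in $\bigcup_n M_n$ is a countable selfgeneric elementary submodel with $N \cap \omega_1 = M \cap \omega_1$: indeed $N \prec H_\theta$ as an increasing union of elementary submodels, $N \cap \omega_1 = M \cap \omega_1$ because each step satisfies $S_n \parallel_{\omega_1} M_n$ (so $\delta$ never grows), and for every maximal antichain $\mathcal{A} \in N$ we have $\mathcal{A} \in M_n$ for some $n$, whence $P1$ can have played $\mathcal{A}_n = \mathcal{A}$ and $P2$'s legal response $S_n \in \mathcal{A} \cap N$ with $\delta(N) = \delta(M_n) \in S_n$ witnesses selfgenericity. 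The key point is that ``projective stationarily many $M$ for which $P2 \nearrow \mathbf{G}^{cat}_M$'' transfers to ``projective stationarily many selfgeneric $N$'': given stationary $S \subseteq \omega_1$ and a club (or a structure on $H_\theta$), pick $M$ in the witnessing family with $M \cap \omega_1 \in S$ closed under the club, play out the bookkeeping game to get $N \supseteq M$ selfgeneric with $N \cap \omega_1 = M \cap \omega_1 \in S$, still closed under the club.

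For $(c) \Rightarrow (d)$: projective stationarily many selfgeneric $M \prec H_\theta$ in particular means stationarily many, and stationarily many selfgeneric countable elementary submodels is a standard reformulation of precipitousness of $\NS$ — one gets from a generic iteration built along an increasing $\omega_1$-chain of such submodels (as in Lemma \ref{lem unique_generic_iteration}) a wellfounded generic ultrapower, and conversely; I would cite this as folklore or give the one-paragraph argument that a descending sequence of conditions forcing illfoundedness of the generic ultrapower can be reflected into a selfgeneric $M$ to derive a contradiction. The real content of the proposition, and the step I expect to be the main obstacle, is $(d) \Rightarrow (a)$: from $\NS$ precipitous alone, produce projective stationarily many $M$ with $P2 \nearrow \mathbf{G}^{c\text{-}c}_M(X)$, i.e.\ simultaneously handling the capturing moves (functions $f_n : \omega_1 \to X$) and the catching moves.

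For $(d) \Rightarrow (a)$ I would proceed as follows. Fix a stationary $S \subseteq \omega_1$ and a function $F : [H_\theta]^{<\omega} \to H_\theta$; I must find $M \prec H_\theta$ closed under $F$ with $M \cap \omega_1 \in S$ and $P2 \nearrow \mathbf{G}^{c\text{-}c}_M(X)$. Working in a large $H_{\theta'}$, build an increasing continuous chain $\langle N_\xi : \xi < \omega_1\rangle$ of countable elementary submodels of $H_{\theta'}$ containing everything relevant ($X$, $S$, $F$, $\theta$, a wellorder of $H_\theta$), and use precipitousness to arrange, on a club of $\xi$, that $N_\xi$ is ``internally selfgeneric'' in the sense needed. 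By precipitousness, the set $T$ of $\xi$ with $\delta(N_\xi) \in S$ and $N_\xi \cap H_\theta$ selfgeneric is stationary (this is $(d)\Rightarrow$ stationarily-many-selfgeneric applied carefully to keep $\delta \in S$). Now for $\xi \in T$ put $M = N_\xi \cap H_\theta$: the strategy for $P2$ in $\mathbf{G}^{c\text{-}c}_M(X)$ is read off from the elementarity of $N_\xi$ — when $P1$ plays a maximal antichain $\mathcal{A}_n \in M_n$, selfgenericity of $M$ (inherited as $M_n$ grows but $\delta$ is fixed) supplies the required $S_n$; when $P1$ plays $x_n \in X$, use that $X$ has size $<\theta$ and $\delta(M) \in S$ together with a suitable function coding scheme inside $N_\xi$ to produce $f_n : \omega_1 \to X$ with $f_n(\delta(M_n)) = x_n$ and $f_n \parallel_{\omega_1} M_n$ — the existence of such $f_n$ is exactly where one needs that $M_n \cap \omega_1$ is not ``trapped'', which precipitousness (via selfgenericity at stage $\xi$) guarantees. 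The delicate bookkeeping is that $P2$ must respond to an adversarially chosen infinite sequence of challenges while the model keeps growing, so the strategy has to be uniform: I would phrase it as $P2$ maintaining the invariant ``$M_n = \Hull(M, \vec{a}_n)$ for the finite sequence $\vec a_n$ of all moves so far, and $M_n \cap \omega_1 = \delta(M)$, and $M_n$ is selfgeneric'', and show each invariant is preserved by the prescribed response; the selfgenericity of the limit model $M$ together with $\Hull$ not adding ordinals below $\delta(M)$ (the content of $S_n \parallel_{\omega_1} M_n$ and $f_n \parallel_{\omega_1} M_n$) is what makes the invariant self-sustaining. Closure under $F$ is arranged by taking $N_\xi$ closed under $F$, which is a club condition, so it survives intersecting with the stationary $T$. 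The main obstacle, then, is verifying that the capturing challenges can always be met while keeping $\delta(M_n)$ fixed and the model selfgeneric — i.e.\ that selfgenericity is robust enough to simultaneously support antichain-catching and function-capturing — and I would expect to isolate this as a lemma of the form ``if $M$ is selfgeneric and $X \in M$ with $|X| < \theta$, then for every finite $\vec a$ with $\Hull(M,\vec a) \cap \omega_1 = \delta(M)$ and every $x \in X$ there is $f : \omega_1 \to X$ in $\Hull(M,\vec a)$'s ambient universe with $f(\delta(M)) = x$ and $f \parallel_{\omega_1} \Hull(M,\vec a)$'', proved by a density/genericity argument over $\NS^+$.
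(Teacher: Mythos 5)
Your cycle $(a)\Rightarrow(b)\Rightarrow(c)\Rightarrow(d)$ matches the paper's proof: $(a)\Rightarrow(b)$ is trivial, $(b)\Rightarrow(c)$ is exactly the bookkeeping argument showing that $P2\nearrow\mathbf{G}^{cat}_M$ is equivalent to $M$ having a selfgeneric extension with the same $\omega_1$-trace, and $(c)\Rightarrow(d)$ is the standard reflection of a forced ill-founded sequence into (the transitive collapse of) a selfgeneric model. The gap is in $(d)\Rightarrow(a)$, precisely at the point you yourself flag as the main obstacle. Two of your claims are unsubstantiated and, as stated, not true in general. First, selfgenericity is not inherited by the hulls $M_n=\Hull(M,\vec a_n)$: a maximal antichain lying in $M_n\setminus M$ need not be caught, so the invariant ``$M_n$ is selfgeneric'' is not self-sustaining, and selfgenericity of $M$ alone does not even dispose of the catching moves after round $0$. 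Second, and more seriously, your closing ``lemma'' --- that a selfgeneric $M$ admits, for every $x\in X$, a function $f:\omega_1\to X$ with $f(\delta(M))=x$ and $f\parallel_{\omega_1}M$ --- is a local strong Chang's conjecture statement that does not follow from selfgenericity of a single model; the paper's later results relating the capturing games to $\mathsf{SCC}^{\textrm{cof}}$ and $\textrm{Glob-}\mathsf{SCC}^{\textrm{cof}}$ show that capturing has genuinely different content from antichain catching. No ``density argument over $\NS^+$'' is offered, and I do not see one.

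The paper's proof of $(d)\Rightarrow(a)$ sidesteps both problems by working in a generic ultrapower. Let $G\subseteq\NS^+$ be generic with $A\in G$, let $j:V\to W$ be the induced (well-founded, by precipitousness) ultrapower embedding, and let $N\prec H_\theta$ contain $X$ and be closed under $F\circ f$ for all $f\in N$. The candidate model is $\widetilde N=\{\,j(f)(\xi): f\in j[N],\ \xi<\omega_1^V\,\}$, a countable elementary submodel of $H_{j(\theta)}$ in $W$ with $\widetilde N\cap\omega_1^W=\omega_1^V\in j(A)$. One shows $P2\nearrow\mathbf{G}^{c\text{-}c}_{\widetilde N}(j(X))$ in $W$ by contradiction: if $\Sigma\in W$ were a winning strategy for $P1$, the tree of partial plays against $\Sigma$ in which $P2$ has not yet lost would be well-founded in $W$, yet $V[G]$ exhibits an infinite branch --- a capturing challenge $x\in j(X)$ is answered by $j(f)$ where $x=j(f)(\omega_1^V)$, and a catching challenge by $j(S)$ for $S$ in the corresponding ground-model maximal antichain met by $G$. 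Every move lies in $j[H_\theta]$, whose trace on $\omega_1^W$ equals $\omega_1^V=\widetilde N\cap\omega_1^W$, so $P2$ never violates the rules; absoluteness of well-foundedness gives the contradiction, and elementarity of $j$ pulls the conclusion back to $V$. It is exactly the representation of every element of $j(X)$ as $j(f)(\omega_1^V)$ for a ground-model $f:\omega_1\to X$ that supplies the capturing functions your construction is missing.
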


\begin{proof}$\;$

\noindent		
$	\boxed{(d)\Rightarrow (a)}$\vspace{0.05cm}
We first prove that $(d)$ implies $(a)$. The argument is a  relative of those in \cite[Theorem 3.8]{cox_zeman} and \cite[Proposition 2.4]{ketlarzap}.\\	
	Let $A \subseteq \omega_1$ be stationary and let $F: H_\theta \to H_\theta$ be arbitrary. We use an elementarity-argument to prove that there exists a countable model
	$M \prec H_\theta$ which is closed under $F$ and satisfies $M \cap \omega_1 \in A$ and $P2 \nearrow \mathbf{G}^{c\text{-}c}_M(X).$ 
	Let $G \subseteq \NS^+$ be a filter generic over $V$ and containing $A$. Using precipitousness of $\NS,$ let $W$ be the corresponding transitivised ultrapower of $V$ and let $j:V\to W$ be the corresponding elementary embedding.
	Select (in $V$) an arbitrary countable elementary submodel $N \prec H_\theta$ that contains $X$ and has the property that for every function $f: \omega_1 \to H_\theta$ which is in $N,$ also the function\linebreak $F\circ f: \omega_1 \to H_\theta$ is in $N.$\\
	Next, we continue working in $W.$ Note that, in $W$, $j(N)$ is a countable elementary submodel of $H_{j(\theta)}$. Hence, the model
	\[\widetilde{N} := \{ f(\xi) : \xi \in \omega_1^V, f \in j(N) \text{ with } \xi \in \dom(f)\},\]
	is, in $W$, a countable elementary submodel of $H_{j(\theta)}.$\\ By construction we have that $\widetilde{N} \cap \omega_1^W = \omega_1^V \in j(A)$ and it clearly follows from the choice of $N$ that $\widetilde{N}$ is closed under the function $j(F):H_{j(\theta)} \to H_{j(\theta)}.$  Using elementarity of the embedding $j,$ the proof will be completed if we can establish that
	\[(\text{ the model } \widetilde{N} \prec H_{j(\theta)} \text{ satisfies } P2 \nearrow\mathbf{G}^{c\text{-}c}_{\widetilde{N}}(j(X))\, )^W.\]
	Arguing by contradiction, suppose that this is not the case. Then there exists in $W$ some winning strategy $\Sigma$ for $P1$ (the Challenger) in the game $\mathbf{G}^{c\text{-}c}_{\widetilde{N}}(j(X)).$ Let $T$ be the tree (as defined in $W$) of all partial plays of the game $\mathbf{G}^{c\text{-}c}_{\widetilde{N}}(j(X))$ in which $P1$ plays by the strategy $\Sigma$ and in which $P2$ has not lost yet, then
	\[(\text{ the tree } T \text{ is well-founded })^W.\]
	We now leave $W$ and describe in $V[G]$ an infinite branch $B$ in the tree $T,$ thus arriving at the desired contradiction (since $W$ is transitive, well-foundedness is absolute between $W$ and $V[G]$). The branch $B$ can be defined as follows:
	
	\begin{center}
		\begin{minipage}{0.90\textwidth}
			\begin{itemize}
	\item		$P1$ makes a $\mathbf{G}^{cap}_{\widetilde{N}}(j(X))$-move in round $n$:\\
			Suppose $P1$ plays the set $x \in j(X).$ There exists $f: \omega_1^V \to X$ such that $x= j(f)(\omega_1^V).$\\ $P2$ replies by playing the function $j(f).$
			\end{itemize}
		\end{minipage}
	\end{center}
	
	\begin{center}
		\begin{minipage}{0.90\textwidth}
			\begin{itemize}
				\item
			$P1$ makes a $\mathbf{G}^{cat}_{\widetilde{N}}$-move in round $n$:\\
			Suppose $P1$ plays the maximal antichain $\mathcal{A}_n,$ then it is of the form $j(f)(\xi,j(\bar{y})),$ where $j(\bar{y})$ is the sequence of moves that have been played by $P2$ in the first $n$ moves, $\xi \in \omega_1^V$, and where 
			\[f: \dom(f) \to \{ \mathcal{A} \subseteq \NS^+ : \mathcal{A }\text{ is a maximal antichain in } \NS^+ \}^V\] is a function which belongs to $N.$ Then \[\mathcal{B} := f(\xi,\bar{y}) \in \{ \mathcal{A} \subseteq \NS^+ : \mathcal{A }\text{ is a maximal antichain in } \NS^+ \}^V,\] so there exists some $S\in \mathcal{B} \cap G$.\\ $P2$ replies by playing the set $j(S).$
			\end{itemize}
		\end{minipage}
	\end{center}
	
	\noindent
	To check that $P2$ does not lose by playing this way, we note that
	\[ M_n \subseteq \{ f(\xi, j(w)): \xi \in \omega_1, w \in H_\theta^{< \omega}, f \in j(N)= j[N] \} \subseteq j [H_\theta]\]
	and
	\[j [H_\theta] \cap \omega_1^W = \omega_ 1^V = \widetilde{N} \cap \omega_1^W.\]

\noindent		
$	\boxed{(a)\Rightarrow (b)}$
The implication $(a)\Rightarrow(b)$ is trivial.	

\noindent		
$	\boxed{(b)\Rightarrow (c)}$
It was already noted above that 
\[ P2 \nearrow \mathbf{G}^{cat}_M\] is equivalent to the statement that there exists an extension of $M$ to a countable selfgeneric $N\prec H_\theta$ containing the same countable ordinals. It is easily seen to follow from this that $(b)$ and $(c)$ are equivalent.
	
	\noindent		
	$	\boxed{(c)\Rightarrow (d)}$\vspace{0.05cm}
Finally, the direction $(c)\Rightarrow(d)$ is well known and can be found in \cite[Lemma~3.6]{cox_zeman}.  
We repeat this short argument for the sake of completeness.
Suppose $\NS$ were not precipitous. There would exist $S \in  \NS^+$ such that 
\begin{equation*}
\begin{split}
	S \Vdash \text{there exists an infinite }<_G&\text{-decreasing} \text{ sequence}\\ &\text{ of ground-model functions } f_n: \omega_1 \to  \Ord .
	\end{split}
\end{equation*}
 Let $M\prec H_\theta$ be countable and selfgeneric with $S \in M$ and $\delta := M\cap \omega_1 \in S.$
Let $\pi: M \to \mathscr{M}$ be the transitive collapse of $M.$ We can assume $\theta$ was sufficiently large so that inside the transitive $\mathsf{ZFC}^\bullet$-model $\mathscr{M}$, $\pi(S)$ is a stationary subset of $\omega_1$ and
\begin{equation*}
	\begin{split}
\pi(S) \Vdash \text{there exists an infinite } <_G&\text{-decreasing} \text{ sequence}\\ &\text{  of ground-model functions } f_n: \omega_1 \to  \Ord .
	\end{split}
\end{equation*}
Moreover, by selfgenericity,
\[ G_\delta = \{ \pi(T) : T\in \NS^+ \cap M \text{ with } \delta \in T\}\]
is a $\pi(\NS^+)$-filter that is generic over $\mathscr{M}$. This contradicts the prior statement about $\pi(S)$ forcing ill-foundedness of $\Ult(V,G)$ because $\pi(S) \in G_\delta$ and $\Ult(\mathscr{M}, G_\delta)$ is well-founded as it is $\in$-isomorphic to $\Hull(M,\delta).$

\end{proof}


Although not needed for the discussion in the next section of the forcing $\theforcingg,$ we end this section with some results that show that the equivalences in Proposition~\ref{prop projstatmany}
fall apart when one replaces \emph{projective stationary many} by \emph{club many}.

It is well known that the existence of club many selfgeneric models is equivalent to saturation of the non-stationary ideal (see e.g.\ \cite[Vol.\ 2, Lemma 13.3.46]{foreman}). Therefore, if $\NS$ is saturated there exist club many countable $M\prec H_\theta$ for which $P2 \nearrow \mathbf{G}^{cat}_M.$
In contrast, the existence of club many countable models $M \prec H_\theta$ for which $P2 \nearrow \mathbf{G}^{cap}_M(X)$ can be translated into strong versions of Chang's conjecture.

\begin{definition}($\textrm{cof}$-Strong Chang's Conjecture, \cite[Definition 4.3]{cox})
 $\mathsf{SCC}^{\textrm{cof}}$ is the statement that for all regular cardinals 
$\theta \gg \omega_2$ and every countable $M \prec H_\theta$ there exist cofinally many $\xi \in \omega_2$ such that $\xi  \parallel_{\omega_1} M.$
\end{definition}

\begin{definition}(Global Strong Chang's Conjecture, \cite[Definition 5.6]{doebler_schindler}, \cite[Definition~4.9]{cox})
	$\textrm{Glob-}\mathsf{SCC}^{\textrm{cof}}$ is the statement that for all regular cardinals $\theta \gg \lambda \geq \omega_2,$ for every\linebreak $\lambda \in M \prec H_\theta$ and every $X \in [H_\lambda]^{\omega_1},$ there exists $Y \in [H_\lambda]^{\omega_1}$  such that
	\[ X \subseteq Y \text{ and } Y \parallel_{\omega_1} M.\]
\end{definition}

\begin{lemma} \label{lem capturing countable ordinals}
	The following two statements are equivalent.
	\begin{enumerate}[label=(\alph*)]
		\item $\mathsf{SCC}^{\textrm{cof}},$
		\item for every regular $\theta \gg \omega_2$ and every $M \prec H_\theta$ countable, $P2$ has as a winning strategy in the $\omega_2$-capturing game $\mathbf{G}^{cap}_M(\omega_2)$ for $M$.
		
	\end{enumerate}
\end{lemma}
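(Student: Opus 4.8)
The plan is to prove the two implications separately, noting first that $\mathbf{G}^{cap}_M(\omega_2)$ is essentially a game in which $P2$ must, round by round, reflect an arbitrary ordinal $x_n \in \omega_2$ down into $M$ via a function $f_n \colon \omega_1 \to \omega_2$ satisfying $f_n(M_n \cap \omega_1) = x_n$ and $f_n \parallel_{\omega_1} M_n$, while the models $M_n = \Hull(M_{n-1}, f_{n-1})$ grow but never absorb new countable ordinals. So the content of a winning strategy for $P2$ is exactly a way of doing this capturing simultaneously for all ordinals in $\omega_2$, consistently along the increasing chain.

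For $(b) \Rightarrow (a)$: given $\theta \gg \omega_2$ regular and $M \prec H_\theta$ countable, I would fix a winning strategy $\Sigma$ for $P2$ in $\mathbf{G}^{cap}_M(\omega_2)$ and, given an arbitrary $\xi_0 \in \omega_2$, have $P1$ play $x_0 = \xi_0$; then $P2$ responds (by $\Sigma$) with some $f_0 \colon \omega_1 \to \omega_2$ with $f_0(\delta(M)) = \xi_0$ and $f_0 \parallel_{\omega_1} M$. Setting $M_1 = \Hull(M, f_0)$, we have $M_1 \cap \omega_1 = M \cap \omega_1$ (this is what $f_0 \parallel_{\omega_1} M$ gives, via the characterization of $\parallel_{\omega_1}$), and $\xi_0 = f_0(\delta(M)) \in M_1$, so $\sup(M_1 \cap \omega_2) > \xi_0$ — actually I need $\xi_0 \parallel_{\omega_1} M$, not just $\xi_0 \in M_1$. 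The point is that $\Hull(M, f_0) \cap \omega_1 = M \cap \omega_1$ and $\xi_0 \in \Hull(M,f_0)$, and since $\Hull(M,f_0)$ is the minimal countable elementary submodel $\supseteq$-extending $M$ and containing $f_0$, and it contains $\xi_0$, we get $\Hull(M,\xi_0) \subseteq \Hull(M,f_0)$, whence $\Hull(M,\xi_0) \cap \omega_1 \subseteq \Hull(M,f_0) \cap \omega_1 = M \cap \omega_1$; the reverse inclusion is trivial, so $\xi_0 \parallel_{\omega_1} M$. Since $\xi_0 \in \omega_2$ was arbitrary, $\mathsf{SCC}^{\textrm{cof}}$ follows (the set of such $\xi$ is in fact all of $\omega_2 \setminus (M \cap \omega_2)$, which is certainly cofinal).

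For $(a) \Rightarrow (b)$: assume $\mathsf{SCC}^{\textrm{cof}}$ and let $M \prec H_\theta$ be countable. I need to describe a winning strategy for $P2$, playing along the chain $M = M_0 \subseteq M_1 \subseteq \cdots$. Suppose $P1$ has played $x_n \in \omega_2$ against the current model $M_n$; I want $f_n \colon \omega_1 \to \omega_2$ with $f_n(\delta(M_n)) = x_n$ and $f_n \parallel_{\omega_1} M_n$, equivalently $\Hull(M_n, f_n) \cap \omega_1 = M_n \cap \omega_1$. The natural approach is: by $\mathsf{SCC}^{\textrm{cof}}$ applied to $M_n \prec H_\theta$, find $\xi \in \omega_2$ with $\xi > x_n$, $\xi > \sup(M_n \cap \omega_2)$, and $\xi \parallel_{\omega_1} M_n$, i.e. $\Hull(M_n, \xi) \cap \omega_1 = M_n \cap \omega_1$. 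Working inside $\Hull(M_n, \xi)$ (which still has the same countable ordinals as $M_n$ and which sees $\xi$, hence sees $\delta(M_n) = \delta$ as an element of $\omega_1^{\mathscr H}$ only via parameters from $M_n$ — here I must be slightly careful), I would like a surjection $g \colon \omega_1 \to \xi$ coded in a way that lets me read off $f_n$. Concretely: $\Hull(M_n,\xi)$ has an element which is a bijection $h \colon \omega_1 \to \xi$ (as $|\xi| = \omega_1$ and $\xi$ is there), but $h \notin M_n$ in general, so I cannot use $h$ directly as the move; instead I should take a name-like object. The cleanest device is: since $\Hull(M_n, \xi) = \Hull(M_n, h)$ for a suitable bijection $h\colon\omega_1\to\xi$, and $\Hull(M_n,h) \cap \omega_1 = M_n \cap \omega_1$, i.e. $h \parallel_{\omega_1} M_n$, I can let $P2$ play $f_n = h$ provided I can also arrange $h(\delta) = x_n$; this I fix by composing $h$ with a bijection of $\xi$ fixing everything outside a small set and sending $h^{-1}(x_n) \mapsto \delta$ and $\delta\mapsto h^{-1}(x_n)$ — this swap is definable from $h$, $x_n$, $\delta$, and crucially $\delta \in M_{n+1}$ is permitted since $M_{n+1} = \Hull(M_n, f_n)$ is allowed to contain $\delta(M_n)$. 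One checks the swap does not enlarge $\Hull(M_n, f_n) \cap \omega_1$ beyond $M_n \cap \omega_1$ because it is definable from parameters one of which ($\delta$) is itself $< \omega_1$ and the rest are in $M_n \cup \{h\}$, and $\Hull(M_n, h, \delta)$ still collapses correctly. Finally, since each round produces $M_{n+1}$ with $M_{n+1} \cap \omega_1 = M_n \cap \omega_1$ and $f_n$ legal, $P2$ never disobeys the rules, so $P2$ wins; this defines the winning strategy.

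The main obstacle I anticipate is the bookkeeping in $(a) \Rightarrow (b)$ showing that the single ordinal $\xi$ supplied by $\mathsf{SCC}^{\textrm{cof}}$, once turned into a function $f_n$, keeps $\Hull(M_n, f_n)$ from swallowing new countable ordinals — i.e. verifying $f_n \parallel_{\omega_1} M_n$ survives the bijection-and-swap manipulation, and that $\delta(M_n)$ appearing as a value of $f_n$ and hence as a member of $M_{n+1}$ is harmless (it is, precisely because the capturing game permits $M_{n+1}$ to grow to include $\delta(M_n)$, unlike the catching game's stricter constraints). A secondary subtlety is that $\mathsf{SCC}^{\textrm{cof}}$ is stated for $\theta \gg \omega_2$ and a single model, so applying it repeatedly to the growing models $M_n$ is fine, but one should note $\Hull(M_n, \xi)$ may have larger intersection with higher $H_\lambda$'s — irrelevant here since the game only concerns $\omega_2$. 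I would present the bijection-swap argument as a short lemma-internal claim and leave the routine collapse computation to the reader.
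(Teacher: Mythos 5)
Both directions of your argument have gaps; the one in $(a)\Rightarrow(b)$ is fatal. Your swap construction produces $f_n=h\circ\sigma$, where $\sigma$ transposes $\delta:=\delta(M_n)$ and $h^{-1}(x_n)$, and you justify its legality by asserting that ``$M_{n+1}=\Hull(M_n,f_n)$ is allowed to contain $\delta(M_n)$.'' It is not: the rule of the capturing game is $f_n\parallel_{\omega_1}M_n$, which by definition means $\Hull(M_n,f_n)\cap\omega_1=M_n\cap\omega_1$, and since $\delta$ is the least countable ordinal outside $M_n$, the model $M_{n+1}$ must \emph{not} contain $\delta$. With your construction it does. You take $h$ with $\Hull(M_n,h)=\Hull(M_n,\xi)$, so $h=F(\xi)$ for some $F\in M_n$; but $\xi=\sup(\ran(f_n))$ is definable from $f_n$ over $M_n$, so $h\in\Hull(M_n,f_n)$, hence the two-element set $\{\alpha:f_n(\alpha)\neq h(\alpha)\}=\{\delta,h^{-1}(x_n)\}$ lies in the elementary submodel $\Hull(M_n,f_n)$, and therefore so does $\delta$. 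So $P2$ loses on the spot. The paper avoids any swap: it arranges for the move $f$ to be an \emph{element} of an extension $M^*\supseteq M_n$ with $M^*\cap\omega_1=M_n\cap\omega_1$ (so $\Hull(M_n,f)\subseteq M^*$ gives $f\parallel_{\omega_1}M_n$ for free) and controls the value $f(\delta)$ by an order-type computation rather than by naming $\delta$: using $\mathsf{SCC}^{\textrm{cof}}$ repeatedly one finds such an $M^*$ together with $\xi^*\in M^*\cap\omega_2$ satisfying $\otp(M^*\cap\xi^*)=\rho$ for the prescribed countable target $\rho$, takes a bijection $g:\omega_1\to\xi^*$ in $M^*$, and sets $f(\alpha)=\otp(g[\alpha])$, so that $f(\delta)=\otp(g[M^*\cap\omega_1])=\otp(M^*\cap\xi^*)=\rho$; an uncountable target $x_n$ is reduced to this case via a bijection onto some $\xi>x_n$ lying in the (suitably extended) model. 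Some such device is unavoidable --- the ``bookkeeping'' you deferred is precisely the mathematical content of this direction.

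In $(b)\Rightarrow(a)$ you assert $\xi_0=f_0(\delta(M))\in\Hull(M,f_0)$. This fails in general: $P2$ may legally answer with some $f_0\in M$ having $f_0(\delta(M))=\xi_0$, and then $\Hull(M,f_0)=M\not\ni\xi_0$. Indeed your stronger claim that every $\xi\in\omega_2\setminus M$ satisfies $\xi\parallel_{\omega_1}M$ is outright false: take $\xi=g(\delta(M))$ for an injective $g:\omega_1\to\omega_2$ with $g\in M$; then $\delta(M)=g^{-1}(\xi)\in\Hull(M,\xi)$, so $\xi\not\parallel_{\omega_1}M$. The repair is the paper's one-line trick: $\sup(\ran(f_0))$ \emph{is} an element of $\Hull(M,f_0)$, is $\geq\xi_0$, and hence witnesses that there are cofinally many $\xi\parallel_{\omega_1}M$.
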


\begin{proof}$\,$\\
	$\boxed{(a) \Rightarrow (b)}$ Suppose first $P1$ plays $\rho \in \omega_1.$ 
	Use $\mathsf{SCC}^{\textrm{cof}}$ to find $M \subseteq M^* \prec H_\theta$ such that\linebreak $M\cap\omega_1=M^*\cap\omega_1$ and such that there exists $\xi^* \in  M^* \cap \omega_2$ satisfying\linebreak 
	$\otp( M^* \cap \xi^* ) = \rho.$ Choose inside $M^*$ a bijection $g$ from $\omega_1$ onto $\xi^*.$  Let $f: \omega_1 \to \omega_1$ be defined by 
	$f(\alpha) = \otp(g[\alpha]).$
	Then $f \parallel_{\omega_1} M$ and $f(\delta(M)) = \rho.$\\ Suppose then that $P1$ plays $\rho \in \omega_2 \setminus \omega_1.$ By $\mathsf{SCC}^{\textrm{cof}}$, we can assume that there exists $\xi \in M$ such that $\rho < \xi.$ Choose in $M$ a bijection $g$ from $\omega_1$ onto $\xi.$ Using the argument in the foregoing case, one finds $f: \omega_1 \to \omega_1$ such that $f \parallel_{\omega_1} M$ and $f(\delta(M)) = g^{-1}(\rho).$\\
	$\boxed{(b) \Rightarrow (a)}$\vspace{0.05cm} Given $\rho \in \omega_2,$ it follows from $P2 \nearrow \mathbf{G}^{cap}_M(\omega_2)$ that there exists $f: \omega_1 \to \omega_2$ such that $f \parallel_{\omega_1} M$ and  $f(\delta(M)) = \rho.$  Then $\xi = \sup(f)$ satisfies $\xi > \rho$ and $\xi \parallel_{\omega_1} M.$			
\end{proof}

\begin{proposition}\label{prop globSCC games}
	The following two statements are equivalent.
	\begin{enumerate}[label=(\alph*)]
		\item $\textrm{Glob-}\mathsf{SCC}^{\textrm{cof}},$
		\item for all regular $\theta \gg \lambda \geq \omega_2,$ and every $M \prec H_\theta$ with $\lambda \in M,$ $P2$ has a winning strategy in the $H_\lambda$-capturing game $\mathbf{G}^{cap}_M(H_\lambda)$ for $M.$
	\end{enumerate}
\end{proposition}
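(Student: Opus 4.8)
The plan is to prove Proposition~\ref{prop globSCC games} by closely imitating the structure of Lemma~\ref{lem capturing countable ordinals}, but one cardinal ``higher'' and with uncountable objects $Y \in [H_\lambda]^{\omega_1}$ in place of ordinals $\xi < \omega_2$, while keeping track of the bookkeeping needed to reduce $H_\lambda$-valued functions to the single-set reflection statement $\textrm{Glob-}\mathsf{SCC}^{\textrm{cof}}$.

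For the direction $(a) \Rightarrow (b)$, I would describe a winning strategy for $P2$ in $\mathbf{G}^{cap}_M(H_\lambda)$. Suppose at round $n$ the current model is $M_n$ (with $M_n \cap \omega_1 = M \cap \omega_1 =: \delta$, since every move will be played $\parallel_{\omega_1}$) and $P1$ plays $x_n \in H_\lambda$. First enumerate, inside $M_n$, the elements of $H_\lambda$ that $M_n$ already ``knows about'': pick a bijection $e_n \in M_n$ from $\omega_1$ onto some $X_n \in [H_\lambda]^{\omega_1}$ with $X_n \supseteq M_n \cap H_\lambda$ (such $X_n$ exists because $|M_n \cap H_\lambda| = \omega$, so we can even take $X_n$ of size $\omega_1$ definable from parameters in $M_n$; more care is needed here since $X_n$ itself must be captured by $M_n$ — I will take $X_n \in M_n$ to be $H_\lambda$ itself is too big, so instead I work with a fixed well-ordering $\lhd$ of $H_\lambda$ in $M_n$ and let things be coded by ordinals). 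The cleanest route: fix in $M$ a well-ordering $\lhd$ of $H_\lambda$ of order type $\lambda$, giving a bijection $c : H_\lambda \to \lambda$ in $M$; then $P1$'s move $x_n$ corresponds to $c(x_n) \in \lambda$, and $P2$ must produce $f_n : \omega_1 \to H_\lambda$ with $f_n(\delta) = x_n$, $f_n \parallel_{\omega_1} M_n$, which via $c$ reduces to producing $g_n : \omega_1 \to \lambda$ with $g_n(\delta) = c(x_n)$ and $g_n \parallel_{\omega_1} M_n$. Now apply $\textrm{Glob-}\mathsf{SCC}^{\textrm{cof}}$ to $M_n$ and $X := (M_n \cap \lambda) \cup \{c(x_n)\} \in [H_\lambda]^{\leq \omega_1}$ (pad to size exactly $\omega_1$ if necessary) to get $Y \in [H_\lambda]^{\omega_1}$ with $X \subseteq Y$ and $Y \parallel_{\omega_1} M_n$; take a bijection $h \in \Hull(M_n, Y)$ from $\omega_1$ onto $Y$ and set $g_n(\alpha) = \otp(h[\alpha] \cap Y)$-style collapse so that $g_n(\delta) = $ the position of $c(x_n)$ in the $\in$-order of $Y$; then $g_n \parallel_{\omega_1} M_n$ because $Y \parallel_{\omega_1} M_n$ and $g_n$ is definable from $Y$ together with elements of $M_n$. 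Composing with $c^{-1}$ yields the required $f_n$, and we set $M_{n+1} = \Hull(M_n, f_n)$. Crucially $\Hull(M_n, f_n) \cap \omega_1 = M_n \cap \omega_1$, so the invariant $M_n \cap \omega_1 = \delta$ propagates and $P2$ never disobeys.

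For $(b) \Rightarrow (a)$, I would argue directly. Let $\theta \gg \lambda \geq \omega_2$ be regular, $\lambda \in M \prec H_\theta$ countable, and $X \in [H_\lambda]^{\omega_1}$; enumerate $X = \{x_\alpha : \alpha < \omega_1\}$ (this enumeration need not be in $M$). Run the game $\mathbf{G}^{cap}_M(H_\lambda)$ where $P1$ plays $x_0, x_1, x_2, \dots$ in turn and $P2$ responds by a winning strategy with functions $f_n : \omega_1 \to H_\lambda$, $f_n \parallel_{\omega_1} M_n$, $f_n(\delta(M_n)) = x_n$. Then set $Y := \bigcup_{n < \omega} \ran(f_n) \cup X$; actually to get $|Y| = \omega_1$ and $X \subseteq Y$ one takes $Y = \bigcup_n \ran(f_n)$, which has size $\leq \omega_1$ and contains every $x_n$, hence all of $X$. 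Finally one checks $Y \parallel_{\omega_1} M$: any new countable ordinal in $\Hull(M, Y)$ would appear in $\Hull(M, f_{n_1}, \dots, f_{n_k})$ for some finite set of the $f_n$'s, but each $f_n \parallel_{\omega_1} M_n$ and an easy induction (using $M_{n+1} = \Hull(M_n, f_n)$) shows $\Hull(M, f_0, \dots, f_{n}) \cap \omega_1 = M \cap \omega_1$. Pad $Y$ up to size exactly $\omega_1$ if it is countable, keeping $Y \parallel_{\omega_1} M$, to conclude $\textrm{Glob-}\mathsf{SCC}^{\textrm{cof}}$.

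The main obstacle I anticipate is the $(a) \Rightarrow (b)$ direction, specifically making the reduction from a function $f_n$ with arbitrary range in $H_\lambda$ to a function valued in a single $\omega_1$-sized set $Y$ work uniformly across rounds while preserving $\parallel_{\omega_1}$. One has to be careful that the auxiliary bijections ($c$, and the collapsing maps of the $Y$'s) lie in the right hulls so that $f_n \parallel_{\omega_1} M_n$ genuinely holds; the well-ordering $\lhd$ of $H_\lambda$ should be fixed once and for all inside $M_0 = M$ so it is available in every $M_n$. Once this bookkeeping is set up correctly the verification that no rule is ever broken is routine. A secondary point worth stating explicitly is that $\textrm{Glob-}\mathsf{SCC}^{\textrm{cof}}$ as written applies to countable $M$ with $\lambda \in M$, which is exactly the situation of each $M_n$, so there is no mismatch in hypotheses.
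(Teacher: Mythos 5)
Both directions of your argument contain genuine gaps. In $(a)\Rightarrow(b)$: after applying $\textrm{Glob-}\mathsf{SCC}^{\textrm{cof}}$ to obtain $Y\ni c(x_n)$ with $Y\parallel_{\omega_1}M_n$ and a bijection $h\in\Hull(M_n,Y)$ of $\omega_1$ onto $Y$, you only know that $c(x_n)=h(\rho)$ for \emph{some} $\rho<\omega_1$, whereas the game demands the value at the specific ordinal $\delta=M_n\cap\omega_1$. Your formula $g_n(\alpha)=\otp(h[\alpha]\cap Y)$ is indeed definable from $Y$ and members of $M_n$, so it is $\parallel_{\omega_1}M_n$, but by elementarity its value at $\delta$ is $\otp(\Hull(M_n,Y)\cap Y)$, which bears no relation to the position $\rho$ of $c(x_n)$ in $Y$: the clause ``so that $g_n(\delta)=$ the position of $c(x_n)$'' is asserted, not achieved. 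The analogous $\otp$-trick in Lemma~\ref{lem capturing countable ordinals} works only because there the auxiliary model $M^*$ is \emph{chosen}, via $\mathsf{SCC}^{\textrm{cof}}$, so that $\otp(M^*\cap\xi^*)$ equals the target ordinal. What is missing is precisely a second capture: a function $k:\omega_1\to\omega_1$ with $k(\delta)=\rho$ and $\{k,h\}\parallel_{\omega_1}M_n$, i.e.\ the $\omega_1$-instance of Lemma~\ref{lem capturing countable ordinals}, which requires $\mathsf{SCC}^{\textrm{cof}}$ (implied by $\textrm{Glob-}\mathsf{SCC}^{\textrm{cof}}$). This is exactly how the paper argues: first a function $f$ with $f(\rho)=x$ and $f\parallel_{\omega_1}M$ from $\textrm{Glob-}\mathsf{SCC}^{\textrm{cof}}$, then $g$ with $g(\delta(M))=\rho$ from Lemma~\ref{lem capturing countable ordinals}, and the move is the composition. (A smaller slip: composing your $g_n$ with $c^{-1}$ returns $c^{-1}(\rho)$, not $x_n$; one must pass through the enumeration of $Y$ first.)

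In $(b)\Rightarrow(a)$ the problem is more basic: the game has length $\omega$, so letting $P1$ play $x_0,x_1,\dots$ captures only the countably many $x_n$ with $n<\omega$, while $X=\{x_\alpha:\alpha<\omega_1\}$ has size $\aleph_1$; ``contains every $x_n$, hence all of $X$'' is a non sequitur. Moreover $Y=\bigcup_{n<\omega}\ran(f_n)$ is not definable from finitely many of $P2$'s moves, so the game rules give no control over $\Hull(M,Y)\cap\omega_1$ (your reduction to $\Hull(M,f_{n_1},\dots,f_{n_k})$ concerns hulls generated by the $f_n$'s, not the hull generated by $Y$ as a single element). The paper's fix is a single round: since $|X|=\aleph_1<\lambda$ and $X\subseteq H_\lambda$ with $\lambda$ regular, we have $X\in H_\lambda$, so $P1$ can play $X$ itself; $P2$'s reply $g$ then satisfies $g(\delta(M))=X$ and $g\parallel_{\omega_1}M$, and $Y=\bigcup_{\alpha<\omega_1}g(\alpha)$ (restricting the union to those $\alpha$ with $g(\alpha)\in[H_\lambda]^{\le\omega_1}$) is definable from $g$ alone, hence $Y\parallel_{\omega_1}M$, has size $\aleph_1$ and contains $X$.
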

\begin{proof}$\;$
	
	\noindent
	$\boxed{(a) \Rightarrow (b)}$ $\textrm{Glob-}\mathsf{SCC}^{\textrm{cof}}$ implies that for every $x \in H_\lambda$ there exists some $\rho < \omega_1$ and $f: \omega_1 \to  H_\lambda$ such that $f(\rho) = x$ and $f \parallel_{\omega_1} M.$ $\textrm{Glob-}\mathsf{SCC}^{\textrm{cof}}$ also implies $\mathsf{SCC}^{\textrm{cof}}$, which by Lemma~\ref{lem capturing countable ordinals} implies that there exists a function $g : \omega_1 \to \omega_1$ such that $g(\delta(M)) = \rho$ and $ \{ g,f \} \parallel_{\omega_1} M.$\\
	$\boxed{(b) \Rightarrow (a)}$\vspace{0.05cm} Given  $\lambda \in M \prec H_\theta$ and $X \in [H_\lambda]^{\omega_1},$ find a function $g: \omega_1 \to [H_\lambda]^{\omega_1}$ such that $g(\delta(M)) = X$ and $g \parallel_{\omega_1} M.$ Then $Y = \bigcup_{\alpha \in \omega_1} g(\alpha)$ satisfies
	\[|Y| = \aleph_1, \quad X \subseteq Y, \quad Y \parallel_{\omega_1} M.\]
\end{proof}

\section{The forcing $\theforcing{\lambda}{\mu}$}

	Throughout this section, fix two regular cardinals
	$\omega_1 \ll \lambda \ll \mu.$
We further write $\mathbf{G}^{c\text{-}c}_M$ for $\mathbf{G}^{c\text{-}c}_M(H_\lambda).$ If $z$ is a finite run of the game $\mathbf{G}^{c\text{-}c}_M,$ we write $P2(z)$ for the finite set of all moves that have been made by $P2$ in $z.$  The conditions in the forcing $\theforcing{\lambda}{\mu}$ will be finite sets of special triples,  we call those triples \emph{building blocks}. 
\begin{definition}
	A set $ B\in H_\mu$ is called a \emph{building block} if it is a three-tuple $ B=(M,z,\Sigma)$, with:
	\begin{enumerate}[label=(\alph*)]
		\item $ M\prec H_\mu$ countable with $\lambda \in M$ and such that $P2 \nearrow \mathbf{G}^{c\text{-}c}_M,$
		\item $ \Sigma$ a winning strategy for $P2$ in $ \mathbf{G}^{c\text{-}c}_M$,
		\item $ z$ a finite play of the game $ \mathbf{G}^{c\text{-}c}_M$ in which $P2$ follows $\Sigma$.		
	\end{enumerate}
\end{definition}

\begin{notation}
	Let $B = (M,z,\Sigma)$ and $B' = (M',z',\Sigma')$ be building blocks. We define
	\begin{itemize}
		\item  $\text{Ext}(B):=\Hull(M,P2(z))\prec H_\mu,$ 
		\item $B \sqsubseteq_{\text{game}} B'$ holds if and only if $M = M',\Sigma = \Sigma'$ and $z \subseteq z'.$
	\end{itemize}
\end{notation}

We are now ready to define the main forcing $\theforcing{\lambda}{\mu}$.

	\begin{definition}
	$p$ is a $\theforcing{\lambda}{\mu}$-\emph{condition} if
	\begin{enumerate}[label=(\alph*)]
		\item $p$ is a finite set of building blocks,
		\item for every two building blocks $B_1, B_2\in p$,
		\[ { \text{Ext}(B_1) \ni B_2 \text{ or } B_1=B_2 \text{ or } B_1 \in \text{Ext}(B_2)}. \]
	\end{enumerate}	
	The order on $\theforcing{\lambda}{\mu}$ is defined as follows. If $ p,q\in \theforcing{\lambda}{\mu}$, then 	
	\[{q \leq p\iff (\forall B\in p)(\exists B'\in q)B\sqsubseteq_{\text{game}} B'}. \]
\end{definition}

In analysing the forcing $\theforcing{\lambda}{\mu}$ it will be convenient to use the following additional notation.
\begin{notation}$\,$
\begin{itemize}
	\item Call a building block $B$ an $M$-block if and only if  $B = (M,z,\Sigma)$ for certain $z,\Sigma,$
	\item let $\kl$ be the strict partial order defined on  building blocks by
	 \[ B_1 \kl B_2 \iff B_1 \in \Ext(B_2). \] 
\end{itemize}
\end{notation}

\begin{lemma}\label{lem strongly_ssp}
Suppose $B = (M,z,\Sigma)$ is a building block and $p^*$ is a $\theforcing{\lambda}{\mu}$-condition that contains $B$ as an element. If $\mu \ll \theta$ regular and $N \prec H_\theta$ is countable with $\lambda,\mu \in N$ and satisfies 
\[ N \cap H_\mu = M,\]
then $p^*$ is strongly $(N,\theforcing{\lambda}{\mu})$-semigeneric. 
\end{lemma}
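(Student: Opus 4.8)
The plan is to verify the definition of strongly $(N,\theforcingg)$-semigenericity directly: given an arbitrary $q \leq p^*$, we must produce a trace $\trace(q \mid N) \in \theforcingg$ with $\trace(q \mid N) \parallel_{\omega_1} N$ and such that any $r \in \Hull(N, \trace(q \mid N)) \cap \theforcingg$ with $r \leq \trace(q \mid N)$ is compatible with $q$. The natural candidate is the \emph{restriction of $q$ to $N$}: since $N \cap H_\mu = M$ and the building blocks of $q$ are linearly pre-ordered by the $\kl$-and-$\ni$ trichotomy condition (b), I expect that the building blocks $B' = (M',z',\Sigma')$ of $q$ with $M' \in M$ (equivalently with $B' \kl B$ or $B' \in \Ext(B)$ in the appropriate sense) together form a condition $q \restriction N$ that lies in $N$'s hull over the relevant finite data, while the building blocks extending $B$ (including $B$ itself, since $B \in p^* \supseteq$-below, so $B$ survives in $q$ in the form of some $B^\dagger \sqsupseteq_{\text{game}} B$) are discarded. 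So the first step is to define $\trace(q \mid N)$ precisely and check it is a genuine $\theforcingg$-condition — this amounts to noting that condition (b) is preserved under taking a "downward $\kl$-closed" subset, which is routine.

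The second step is to check $\trace(q\mid N) \parallel_{\omega_1} N$, i.e.\ $\Hull(N, \trace(q\mid N)) \cap \omega_1 = N \cap \omega_1 = \delta(M)$. Here is where the winning strategy $\Sigma$ and the game $\mathbf{G}^{c\text{-}c}_M$ do their work: the building block $B^\dagger = (M, z^\dagger, \Sigma)$ of $q$ extending $B$ records a finite play $z^\dagger$ of $\mathbf{G}^{c\text{-}c}_M$ in which $P2$ follows $\Sigma$, and the moves $P2(z^\dagger)$ are functions and antichain-elements satisfying the $\parallel_{\omega_1}$-conditions built into the game, so $\Ext(B^\dagger) = \Hull(M, P2(z^\dagger))$ adds no new countable ordinals to $M$. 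The point is that $\trace(q\mid N)$ is definable from $N$'s own data together with the finitely many building blocks below $B$, all of which live in $\Ext$ of models inside $M$ — so I would argue $\trace(q\mid N) \in \Hull(M, \text{finite data already in } M)$ and hence $\Hull(N,\trace(q\mid N)) = \Hull(M, \text{that data}) \cup (\text{rest of } N)$, which by the $\parallel_{\omega_1}$ clauses in the game does not move $\omega_1$.

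The third and crucial step is the compatibility clause. Take $r \in \Hull(N, \trace(q\mid N)) \cap \theforcingg$ with $r \leq \trace(q\mid N)$; we must show $r \parallel q$, and the natural witness is $r \cup q$ (or $r \cup \{B^\dagger\} \cup \cdots$). To see $r \cup q$ is a condition we must verify the trichotomy (b) between each building block of $r$ and each building block of $q$. For two blocks both coming from $r$, or both from $q$, this is given. The mixed case splits: a block of $q$ that is $\kl B^\dagger$ (i.e.\ "below $N$") already appears in $\trace(q\mid N)$, so it is handled by $r \leq \trace(q\mid N)$; a block of $q$ that is $\sqsupseteq_{\text{game}}$-above or equal to $B^\dagger$ — in particular $B^\dagger$ itself — must be compared with the blocks of $r$, and here we use that $r \in \Hull(N,\trace(q\mid N))$, so $r$ is an element of (or coded in) a hull of $M = N\cap H_\mu$ by finitely many moves; since $M = N \cap H_\mu$ and $B^\dagger$'s model is exactly $M$, we get $r \in \Ext(B^\dagger)$ or the blocks of $r$ lie in $M \ni$ each block of $r$, giving $\Ext(B^\dagger) \ni (\text{block of } r)$ via property (a)/(b) of building blocks. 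I would also need to recheck that a block $B^{\dagger}$ and a block of $r$ that is an $M$-block of the \emph{same} underlying model can only occur in the $\sqsubseteq_{\text{game}}$-comparable way — this is where the fact that $r \leq \trace(q\mid N)$ forces the $z$-parts to cohere. The main obstacle is precisely this bookkeeping: correctly identifying which blocks of $q$ land in $\trace(q\mid N)$ and proving the trichotomy in the mixed case, i.e.\ showing that membership in $\Hull(N, \cdot)$ translates, via $N \cap H_\mu = M$, into membership in the right $\Ext$-set. Once that is set up the verification of the three clauses is bookkeeping of the kind that the paper's framework is designed to make "entirely standard."
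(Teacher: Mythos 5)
Your proposal matches the paper's proof in all essentials: the trace is $q \cap \Ext(B^{\dagger})$, where $B^{\dagger}$ is the unique $M$-block of $q$, the clause $\trace(q\mid N) \parallel_{\omega_1} N$ follows from the $\parallel_{\omega_1}$ restrictions built into $P2$'s moves, and compatibility is witnessed by amalgamating $r$ with the blocks of $q$ above $B^{\dagger}$, using the dichotomy $B_1 \kl B^{\dagger} = B_2$ or $B_1 \kl B^{\dagger} \kl B_2$ for $B_1 \in r$ and $B_2 \in q \setminus \trace(q\mid N)$. One small correction: the witness must be $r \cup (q \setminus \trace(q\mid N))$ rather than $r \cup q$, since a block of $\trace(q\mid N)$ and its proper game-extension in $r$ share the same underlying model and therefore violate clause (b) of the definition of a condition; your parenthetical alternative is the right choice.
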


\begin{proof}
	To check that $p^*$ is strongly $(N, \theforcing{\lambda}{\mu})$-semigeneric, consider a $\theforcing{\lambda}{\mu}$-con\-dition $q \leq p^*$ and define 
	\[ \trace(q|N) = q \cap \Ext(B_M^q) \in \theforcing{\lambda}{\mu},\]
	where $B_M^q$ is the unique $M$-building block that belongs to $q.$ 
	From $\trace(q|N) \in \Ext(B^q_M)$ follows
	\[ \trace(q|N)  \parallel_{\omega_1} N .\]
	Suppose then that 
	$r \in \theforcing{\lambda}{\mu} \cap \Hull(N,\trace(q|N) )$ and $r \leq \trace(q|N).$ To prove that $r \parallel q,$ one checks that $r \cup (q \setminus \trace(q|N) )$ is a $\theforcing{\lambda}{\mu}$-condition. This follows because for every two building blocks $B_1 \in r$ and $B_2 \in q \setminus \trace(q|N),$ either
	$B_1 \kl B_M^q = B_2$ or $B_1 \kl B_M^q \kl B_2.$
\end{proof}

The foregoing lemma allows to follow the well known side condition pattern for proving (semi)properness: \emph{to find $p^* \leq p$ that is a (semi)generic condition for $M,$ simply add $M$ on top of $p$.}

\begin{proposition}
If $\NS$ is precipitous, then $\theforcing{\lambda}{\mu}$ is strongly ssp.
\end{proposition}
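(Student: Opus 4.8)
The plan is to combine Lemma~\ref{lem strongly_ssp} with the definition of strongly ssp and the characterization of projective stationarity. Fix a sufficiently large regular $\theta \gg \mu$ (so that also $\theta \gg \bbP^{c\text{-}c}(\lambda,\mu)$). I must produce projective stationary many countable $N \prec H_\theta$ such that $\theforcing{\lambda}{\mu}$ is strongly semiproper with respect to $N$. By Lemma~\ref{lem strongly_ssp}, a sufficient condition on $N$ is that it contains $\lambda,\mu$ and that $M := N \cap H_\mu$ can occur as the model-part of a building block, i.e.\ that $M \prec H_\mu$ is countable with $\lambda \in M$ and $P2 \nearrow \mathbf{G}^{c\text{-}c}_M$ (equivalently $P2 \nearrow \mathbf{G}^{c\text{-}c}_M(H_\lambda)$): for then, given $p \in \theforcing{\lambda}{\mu} \cap N$, pick any winning strategy $\Sigma \in H_\mu$ for $P2$ in $\mathbf{G}^{c\text{-}c}_M$, form the building block $B = (M, \emptyset, \Sigma)$, and check that $p^* = p \cup \{B\}$ is a condition refining $p$ — condition (b) in the definition of $\theforcing{\lambda}{\mu}$ holds because for every $B' \in p$ we have $B' \in M \subseteq \Ext(B)$, using $p \in N$ and $p \subseteq H_\mu$ so $p \subseteq M$, while $\Ext(B) = \Hull(M,\emptyset) = M$. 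Then Lemma~\ref{lem strongly_ssp} gives that $p^*$ is strongly $(N, \theforcing{\lambda}{\mu})$-semigeneric.

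So it remains to produce projective stationary many $N \prec H_\theta$ with $\lambda,\mu \in N$ whose intersection with $H_\mu$ satisfies $P2 \nearrow \mathbf{G}^{c\text{-}c}_{N \cap H_\mu}(H_\lambda)$. Here is where precipitousness of $\NS$ enters, via Proposition~\ref{prop projstatmany}: applied with the regular cardinal $\mu$ in place of $\theta$ and with $X = H_\lambda \in H_\mu$, it yields (using $(d) \Rightarrow (a)$) that there exist projective stationary many countable $M \prec H_\mu$ with $P2 \nearrow \mathbf{G}^{c\text{-}c}_M(H_\lambda)$; note these automatically contain $\lambda$ once we intersect with the club of $M$ containing $\lambda$, or we simply observe $\lambda \in H_\mu$ and build it into the stationary set. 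Call this projective stationary set $\mathcal{S} \subseteq [H_\mu]^\omega$. Now I pull $\mathcal{S}$ up to $[H_\theta]^\omega$: the set
\[ \mathcal{S}^{\uparrow} = \{ N \in [H_\theta]^\omega : N \prec H_\theta,\ \lambda,\mu \in N,\ N \cap H_\mu \in \mathcal{S} \} \]
is projective stationary in $[H_\theta]^\omega$. Indeed, for any stationary $T \subseteq \omega_1$ and any algebra $F : H_\theta \to H_\theta$, I find $N \prec H_\theta$ closed under $F$ with $\lambda,\mu \in N$, $N \cap \omega_1 \in T$ and $N \cap H_\mu \in \mathcal{S}$ by a standard argument: there is a club of $N \prec H_\theta$ with $\lambda,\mu \in N$ and $N \cap H_\mu \prec H_\mu$, and for such $N$ the map $N \mapsto N \cap H_\mu$ sends this club to a club (or at least a projection whose image meets every stationary piece) in $[H_\mu]^\omega$ with $N \cap \omega_1 = (N \cap H_\mu) \cap \omega_1$; since $\mathcal{S}$ is projective stationary, the fiber over the $T$-slice is stationary, and elementarity lets me reflect closure under $F$. (This "lifting projective stationarity along $H_\mu \mapsto H_\theta$" is the one slightly technical point, but it is routine: it is the same reflection argument used repeatedly with projective stationary sets.)

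Combining, every $N \in \mathcal{S}^{\uparrow}$ satisfies the hypotheses of Lemma~\ref{lem strongly_ssp} for the building block $B = (N \cap H_\mu, \emptyset, \Sigma)$ as above, hence $\theforcing{\lambda}{\mu}$ is strongly semiproper with respect to $N$; and $\mathcal{S}^{\uparrow}$ is projective stationary in $[H_\theta]^\omega$. As $\theta \gg \bbP^{c\text{-}c}(\lambda,\mu)$ was an arbitrary sufficiently large regular cardinal, $\theforcing{\lambda}{\mu}$ is strongly ssp by definition. The main obstacle, such as it is, is purely bookkeeping: making sure the reflection of the projective stationary set $\mathcal{S}$ from $[H_\mu]^\omega$ to $[H_\theta]^\omega$ preserves both the "$N \cap \omega_1 \in S$ for all stationary $S$" property and the condition $N \cap H_\mu = M$ exactly (not merely $\supseteq$ or $\prec$), and correctly invoking Lemma~\ref{lem strongly_ssp}, whose hypothesis is the literal equation $N \cap H_\mu = M$ — this is why one works with $N \cap H_\mu$ throughout rather than with an arbitrary elementary submodel of $H_\mu$ sitting below $N$.
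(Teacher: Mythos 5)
Your proof is correct and follows essentially the same route as the paper: produce projective stationary many suitable models via Proposition~\ref{prop projstatmany}, extend $p$ by the building block $(N\cap H_\mu,\emptyset,\Sigma)$ sitting on top of it, and invoke Lemma~\ref{lem strongly_ssp}. The only (harmless) divergence is that you apply Proposition~\ref{prop projstatmany} at $\mu$ and lift the resulting projective stationary subset of $[H_\mu]^\omega$ to $[H_\theta]^\omega$ by the standard lifting argument, whereas the paper applies it directly at $\theta$ and passes from $P2 \nearrow \mathbf{G}^{c\text{-}c}_{N}(H_\lambda)$ to $P2 \nearrow \mathbf{G}^{c\text{-}c}_{N\cap H_\mu}(H_\lambda)$; both transfer steps are routine.
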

\begin{proof}
Let $\theta \gg \mu$ be regular. Using Proposition \ref{prop projstatmany}, it suffices to check that $\theforcing{\lambda}{\mu}$ is strongly ssp with respect to every countable model $M \prec H_\theta$ that contains $ \lambda,\mu$ and satisfies $P2 \nearrow \mathbf{G}^{c\text{-}c}_M.$ Note that in this case also $P2 \nearrow \mathbf{G}^{c\text{-}c}_{M\cap H_\mu}.$ Thus, if $p \in M \cap \theforcing{\lambda}{\mu},$ choose $\Sigma_M$ such that $B_M = (M\cap H_\mu, \emptyset, \Sigma_M)$ is a building block, define $p^* =p \cup \{ B_M\}$ and note that $p^* \in \theforcing{\lambda}{\mu}.$ By Lemma \ref{lem strongly_ssp}, $p^*$ is strongly $(M,\theforcing{\lambda}{\mu})$-semigeneric.
\end{proof}

Throughout the rest of this section, we assume that $\NS$ is precipitous and thus 
that $\theforcing{\lambda}{\mu}$ is strongly ssp.
Under this assumption, we analyse the generic object.

\begin{definition}
Let $G$ be a $\theforcing{\lambda}{\mu}$-filter generic over $V.$
\begin{itemize}
	\item Define \[C^G=\{ \delta(M): (\exists p\in G) (\exists B\in p)\  B = (M,z,\Sigma) \text{ for certain }  z,\Sigma \}.\]	
\end{itemize}
\qquad	Clearly, $C^G$ is unbounded in $\omega_1,$ so
	\begin{itemize}
		\item let $(\delta_\alpha^G:{\alpha < \omega_1})$ be the increasing enumeration of $C^G$. 
	\item Define for every $\alpha <\omega_1$,
	\[ M_\alpha^G = \bigcup_{p\in G}\{  \Ext(B) \cap H_\lambda^V : B\in p \text{ is an }M\text{-block and } M\cap \omega_1 = \delta_\alpha^G \}. \]
\end{itemize}
\end{definition}

Note that inside $V[G],$ $M_\alpha^G$ is a countable elementary submodel of $H^V_\lambda$. Countability of $M_\alpha^G$ follows because for every two $p,q \in G,$ and every $(M_1,z_1,\Sigma_1) = B_1 \in p$,\linebreak $(M_2,z_2,\Sigma_2)  = B_2 \in q$:
\[ M_1\cap \omega_1 = M_2 \cap \omega_1 \quad \Rightarrow \quad B_1 \sqsubseteq_{\text{game}} B_2 \text{ or }  B_2 \sqsubseteq_{\text{game}} B_1. \] 

\begin{proposition}\label{prop it-ind fact}
	The sequence $(M_\alpha^G : \alpha < \omega_1)$ is an iteration-inducing filtration of~ $H_\lambda^V.$
\end{proposition}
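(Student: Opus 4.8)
The goal is to verify the three defining properties of an iteration-inducing filtration for the sequence $(M_\alpha^G : \alpha < \omega_1)$: continuity and $\subseteq$-increasingness together with $\bigcup_\alpha M_\alpha^G = H_\lambda^V$; selfgenericity of each $M_\alpha^G$; and the successor equation $M_{\alpha+1}^G = \Hull(M_\alpha^G, \delta_\alpha^G)$. All three should follow by density arguments in the generic $G$, combined with the fact that $P2$'s moves in the games $\mathbf{G}^{c\text{-}c}_M$ drive exactly this kind of growth. The main work is organizing which density claim feeds which property, and the main subtlety is that $M_\alpha^G$ is not a single model from a single building block but a union over all building blocks in $G$ whose model has the right trace on $\omega_1$; one has to check this union behaves coherently, which is where the compatibility observation $M_1 \cap \omega_1 = M_2 \cap \omega_1 \Rightarrow B_1 \sqsubseteq_{\text{game}} B_2 \vee B_2 \sqsubseteq_{\text{game}} B_1$ (noted just before the statement) is used repeatedly.

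\textbf{Step 1: Elementarity, increasingness, and union.} First I would observe that each $M_\alpha^G$ is a directed union (under $\subseteq$) of models $\Ext(B) \cap H_\lambda^V$ with $B$ an $M$-block for $M \cap \omega_1 = \delta_\alpha^G$, hence is an elementary submodel of $H_\lambda^V$, as already noted in the excerpt. For $\subseteq$-increasingness along $\alpha$: given $\alpha < \beta$ and building blocks $B = (M,z,\Sigma) \in p \in G$ with $\delta(M) = \delta_\alpha^G$ and $B' = (M',z',\Sigma') \in q \in G$ with $\delta(M') = \delta_\beta^G$, extend to a common $r \leq p, q$ in $G$; inside $r$ the coherence condition (b) in the definition of $\theforcingg$-condition forces $B \in \Ext(B')$ or $B' \in \Ext(B)$ or $B = B'$, and since $\delta(M) < \delta(M')$ we get $B \kl B'$, i.e.\ $B \in \Ext(B')$, whence $\Ext(B) \subseteq \Ext(B')$ and so $\Ext(B) \cap H_\lambda^V \subseteq \Ext(B') \cap H_\lambda^V$. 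Taking unions gives $M_\alpha^G \subseteq M_\beta^G$. Continuity at limits $\alpha$ is immediate from the definition as a union. For $\bigcup_\alpha M_\alpha^G = H_\lambda^V$, I would use a density argument: for each $x \in H_\lambda^V$ the set of conditions $p$ containing some building block $B = (M,z,\Sigma)$ with $x \in \Ext(B)$ is dense — given any $p$, pick $N \prec H_\mu$ countable with $\lambda, x, p \in N$ and $P2 \nearrow \mathbf{G}^{c\text{-}c}_N$ (exists by projective stationarity / Proposition~\ref{prop projstatmany} under $\NS$ precipitous, since one only needs stationarily many), then $p \cup \{(N, \emptyset, \Sigma_N)\}$ is a condition below $p$ provided $N$ sits correctly relative to the models in $p$; more carefully one chooses $N$ to be $\kl$-above all blocks of $p$, i.e.\ with all of $p$ (hence all $\Ext(B)$, $B \in p$) as elements, which is possible by elementarity. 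Then $x \in N \cap H_\lambda \subseteq \Ext((N,\emptyset,\Sigma_N)) \cap H_\lambda^V$.

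\textbf{Step 2: Selfgenericity and the successor equation.} For selfgenericity of $M_\alpha^G$: fix a maximal antichain $\mathcal{A} \subseteq \NS^+$ with $\mathcal{A} \in M_\alpha^G$; then $\mathcal{A} \in \Ext(B) = \Hull(M, P2(z))$ for some $M$-block $B = (M,z,\Sigma) \in p \in G$ with $\delta(M) = \delta_\alpha^G$. By density, extend $p$ in $G$ to some $r$ containing $B' = (M, z', \Sigma) \sqsupseteq_{\text{game}} B$ in which $P1$ has been made to play the $\mathbf{G}^{cat}_M$-move $\mathcal{A}$ at the first available round after $z$ — this requires arguing that such extensions are dense, which uses that $\mathcal{A}$, being computed from finitely many $P2$-moves in $z$, is a legal $P1$-move in $\mathbf{G}^{c\text{-}c}_M$ from position $z$ (it lies in $M_n = M$ at that stage since $\Ext(B) \supseteq$ that hull), and $\Sigma$ supplies a legal answer $S \in \mathcal{A}$ with $\delta(M) \in S$. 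Then $S \in \mathcal{A} \cap \Ext(B') \cap H_\lambda$... actually $S \in \mathcal{A} \cap M$ up to the hull; the cleaner statement is $S \in \mathcal{A} \cap M_\alpha^G$ with $\delta_\alpha^G \in S$, giving selfgenericity. For the successor equation $M_{\alpha+1}^G = \Hull(M_\alpha^G, \delta_\alpha^G)$: the inclusion $\supseteq$ is easy once one shows $\delta_\alpha^G \in M_{\alpha+1}^G$ (it equals $\delta(M)$ for the relevant $\alpha$-block $M$, and $\delta(M) \in \Hull(M', \cdot)$ for any $\alpha{+}1$-block $M'$ above it by increasingness of the $\delta$'s and the games' bookkeeping, since $\delta(M) < \delta(M') = \delta_{\alpha+1}^G$ and $\delta(M)$ appears as $M \cap \omega_1$ inside $M'$ after the step $M_{n+1} = \Hull(M_n, \cdot)$); for $\subseteq$, one uses a density argument that forces, via appropriate $\mathbf{G}^{cap}_M$- and $\mathbf{G}^{cat}_M$-moves of $P1$, the union defining $M_{\alpha+1}^G$ to be contained in the single hull $\Hull(M_\alpha^G, \delta_\alpha^G)$ — essentially because the very next model after a $\delta_\alpha^G$-model in any condition is, by the games' rule $M_{n+1} = \Hull(M_n, \text{move})$ and the structure of building blocks, obtained from $M_\alpha^G$ by adjoining only data computable from $M_\alpha^G$ and $\delta_\alpha^G$.

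\textbf{Main obstacle.} I expect the hardest point to be the successor equation $M_{\alpha+1}^G = \Hull(M_\alpha^G, \delta_\alpha^G)$, specifically the $\subseteq$ direction: one must show that no building block in $G$ with model $M'$ of trace $\delta_{\alpha+1}^G$ can "overshoot" and put into $\Ext$ anything not already in $\Hull(M_\alpha^G, \delta_\alpha^G)$. This requires a density argument ensuring that whenever such an $M'$-block appears in $G$, there is (densely, hence in $G$) a $\delta_\alpha^G$-block $B$ and a $\delta_{\alpha+1}^G$-block $B'$ with $B \kl B'$ and with $B'$ lying in the game-play that immediately succeeds $B$'s play by exactly one $P2$-move, so that $\Ext(B')$ is generated over $\Ext(B)$ by a single object of the form $S_n$ or $f_n$ which, by the winning-strategy constraints ($S_n \parallel_{\omega_1} M_n$, $f_n \parallel_{\omega_1} M_n$, and the hull being $\Hull(M_n, S_n)$ resp.\ $\Hull(M_n, f_n)$), adds nothing new to $\omega_1$ and is determined by $\Ext(B) \cap H_\lambda^V$ together with $\delta_\alpha^G$. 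Pinning down exactly this one-step density — and checking that the object $f_n$ (a function $\omega_1 \to H_\lambda$) gives $\Hull(M_n, f_n) \cap H_\lambda^V = \Hull(M_n \cap H_\lambda^V, \delta(M_n))$, i.e.\ that capturing-moves don't enlarge the $H_\lambda^V$-part beyond adding $\delta(M_n)$ — is the technical heart of the argument.
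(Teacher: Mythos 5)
Your overall decomposition (covering, selfgenericity, successor equation, each via a density argument) and your treatment of covering and of selfgenericity match the paper's proof, but there are two genuine problems.

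First, continuity at limits is not ``immediate from the definition as a union.'' For limit $\beta$, $M_\beta^G$ is by definition a union over blocks whose base model has trace $\delta_\beta^G$ --- not a union over the earlier levels --- so the inclusion $M_\beta^G\subseteq\bigcup_{\alpha<\beta}M_\alpha^G$ requires an argument. The paper proves it by exactly the capturing-density device you reserve for the successor step: given $x\in\Ext(B_2)\cap H_\lambda^V$ with $B_2$ at level $\beta$, take $B_1$ the $\kl$-maximal block of the condition below $B_2$ (it sits at some level $\alpha_0<\beta$ since conditions are finite and $\beta$ is limit), extend its play by a $\mathbf{G}^{cap}$-move on $x$, and conclude $x=f(\delta_{\alpha_0}^G)\in\Hull(M_{\alpha_0}^G,\delta_{\alpha_0}^G)\subseteq\bigcup_{\alpha<\beta}M_\alpha^G$.

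Second, your ``main obstacle'' rests on a misconception. Two consecutive blocks $B=(M,z,\Sigma)\kl B'=(M',z',\Sigma')$ in a condition are not related by ``one more $P2$-move'': they carry distinct base models $M\neq M'$, and every model occurring in the game of a single block has the same trace $\delta(M)$ on $\omega_1$ (all moves are required to satisfy $\parallel_{\omega_1}M_n$), so the passage from level $\alpha$ to level $\alpha+1$ never happens inside one game. Hence the one-step density you describe does not exist, and the identity $\Hull(M_n,f_n)\cap H_\lambda^V=\Hull(M_n\cap H_\lambda^V,\delta(M_n))$ you propose to verify is false in general: $f_n$ itself belongs to the left-hand side (it lies in $H_\lambda$ because $\lambda$ is regular and $\gg\omega_1$), whereas the right-hand side is a fixed countable set determined by $M_n$ and $\delta(M_n)$ that has no reason to contain $f_n$. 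None of this is needed. The paper's argument for $M_{\alpha+1}^G\subseteq\Hull(M_\alpha^G,\delta_\alpha^G)$ is direct: for each $x\in\Ext(B_2)\cap H_\lambda^V$ with $B_2$ at level $\alpha+1$, densely extend the play of the level-$\alpha$ block by a capturing move on $x$; the answering function $f$ then lies in $M_\alpha^G$ and $x=f(\delta_\alpha^G)\in\Hull(M_\alpha^G,\delta_\alpha^G)$. One does not need to control what else that move adds to $M_\alpha^G$, since enlarging $M_\alpha^G$ only enlarges the hull on the right-hand side.
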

\begin{proof}
	This is checked through the following series of density arguments.
	\begin{claim} $(M_\alpha^G:\alpha <\omega_1)$ is a continuous sequence of countable elementary substructures of $H_\lambda^V$ that together cover $H_\lambda^V$.
		\end{claim}
	\begin{proof}\renewcommand{\qedsymbol}{$\blacksquare$}
	First note that for every $x \in H_\lambda^V$ there are densely many $p \in \theforcing{\lambda}{\mu}$ that contain some block $B = (M,z,\Sigma)$ with $x \in M$.\\	Suppose next that $\beta <\omega_1$ is limit, then \[\bigcup_{\alpha <\beta}M^G_\alpha=M^G_\beta\]
		because\\
	$	\boxed{\subseteq}$ if $\alpha <\beta$ and $x\in M^G_\alpha$, then there exists $p\in G$ with $(M_1,z_1,\Sigma_1),(M_2,z_2,\Sigma_2)\in p$ such that $M_1\cap\omega_1=\delta_\alpha^G$, $M_2\cap\omega_1=\delta_\beta^G$ and
		$x\in\Hull(M_1,P2(z_1))\cap H_\lambda^V.$\\ Then $(M_1,z_1,\Sigma_1) \kl (M_2,z_2,\Sigma_2),$ so $x\in\Hull(M_2,P2(z_2))\cap H_\lambda^V$.\\
		$\boxed{\supseteq}$ if $x\in M^G_\beta$, then there exists $p\in G$ and $B_1,B_2\in p$ such that
		\begin{align*}
			B_1&=\max\{  B\in p: B \kl B_2 \},\\
			B_2&=(M_2,z_2,\Sigma_2),p \Vdash M_2\cap\omega_1=\delta_\beta^G,\ x\in\Ext(B_2),\\
			B_1&=(M_1,z_1,\Sigma_1).
		\end{align*}
		Consider the play $z_1'$ which is an extension of $z_1$ with one round in which $P1$ plays $x$ as a $\mathbf{G}^{cap}$-move and $P2$ answers following $\Sigma_1$.
		Then $p'=p\setminus \{ B_1\}\cup \{  (M_1,z_1',\Sigma_1) \}\in\theforcing{\lambda}{\mu}$ forces that $x\in \bigcup_{\alpha <\beta} M^G_{\alpha}$.
	\end{proof}
	
	\noindent
	\begin{claim} Every $M^G_\alpha$ is selfgeneric.\end{claim}
	\begin{proof}\renewcommand{\qedsymbol}{$\blacksquare$}
		Suppose that $p\in\theforcing{\lambda}{\mu}$ with $(M,z,\Sigma)\in p$, $p \Vdash M\cap\omega_1=\delta_\alpha^G$ and $\mathcal{A}$ is a maximal antichain of $\NS^+$ such that $\mathcal{A}\in\Hull(M,P2(z))$. Then consider the play $z'$ which is an extension of $z$ with one round in which $P1$ plays $\mathcal{A}$ as a $\mathbf{G}^{cat}$-move and $P2$ answers following $\Sigma$, say the answer is $S\in \mathcal{A}$. Then $p'=p\setminus \{ (M,z,\Sigma)  \}\cup\{ (M,z',\Sigma)  \} \in \theforcing{\lambda}{\mu}$ forces that $S\in M^G_\alpha\cap \mathcal{A}$ and $M^G_\alpha\cap \omega_1=\delta_\alpha^G\in S$.
	\end{proof}
	
	\noindent
	\begin{claim} $M^G_{\alpha+1}=\Hull(M^G_\alpha,\delta_\alpha^G)$.\end{claim}
	\begin{proof}\renewcommand{\qedsymbol}{$\blacksquare$}
		$\boxed{\supseteq}$ is already clear.\\
		$\boxed{\subseteq}$\vspace{0.05cm} Suppose that $p\in\theforcing{\lambda}{\mu}$ with $(M_1,z_1,\Sigma_1),(M_2,z_2,\Sigma_2)\in p$, $p \Vdash M_1\cap \omega_1=\delta_\alpha^G$,\linebreak $p \Vdash M_2\cap \omega_1=\delta_{\alpha+1}^G$ and $x\in\Hull(M_2,P2(z_2))\cap H_\lambda^V$ then consider the play $\tilde{z}_1$ which is an extension of $z_1$ with one round in which $P1$ plays $x$ as a $\mathbf{G}^{cap}$-move and $P2$ answers following $\Sigma_1$. Say this answer of $P2$ is the function $f$.
		Then $\tilde{p}=p\setminus \{  (M_1,z_1,\Sigma_1)  \}\cup\{ (M_1,\tilde{z}_1,\Sigma_1) \} \in \theforcing{\lambda}{\mu}$ forces that $x = f(\delta^G_\alpha) \in \Hull(M^G_\alpha,\delta_\alpha^G).$
	\end{proof}
\end{proof}

\begin{definition}
For every $\alpha < \omega_1,$ define $\mathscr{M}_\alpha^G$ to be the transitive collapse of $M_\alpha^G.$
\end{definition}

We now put the foregoing together.

\begin{theorem}\label{th it in extension} Suppose  $\NS$ is precipitous.
	If $G$ is a $\theforcing{\lambda}{\mu}$-filter generic over $V,$ then in $V[G]$ there exists a generic iteration  $\iterat = (\mathscr{M}_\beta, j_{\alpha\beta},G_\alpha : \alpha < \beta < \omega_1)$  of length $\omega_1$ of the model $\mathscr{M}_0^G$ with direct limit equal to $H_\lambda^V.$ Moreover, if $H_\lambda^\#$ exists in $V,$ then the model $\mathscr{M}_0^G$ is iterable.
\end{theorem}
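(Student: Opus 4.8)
The plan is to invoke the machinery already assembled. First I would observe that by Proposition~\ref{prop it-ind fact}, the sequence $(M_\alpha^G : \alpha < \omega_1)$ is an iteration-inducing filtration of $H_\lambda^V$, computed inside $V[G]$. Applying part~(2) of Lemma~\ref{lem unique_generic_iteration} to $\mathscr{H} = H_\lambda^V$ (which is a transitive $\mathsf{ZFC}^\bullet$-model since $\NS$ precipitous is a statement about $\omega_1$ and hence absorbed into $H_\lambda$ for $\lambda$ large, and precipitousness is preserved to $V[G]$ because $\theforcing{\lambda}{\mu}$ is ssp, not collapsing $\omega_1$), there exists in $V[G]$ a unique generic iteration $\iterat = (\mathscr{M}_\beta, j_{\alpha\beta}, G_\alpha : \alpha < \beta < \omega_1)$ whose $\alpha$-th model is the transitive collapse $\mathscr{M}_\alpha^G$ of $M_\alpha^G$, with direct limit equal to $H_\lambda^V$. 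Since $M_0^G$ is by definition $M \cap H_\lambda^V$ for suitable side-condition models $M$, its transitive collapse is $\mathscr{M}_0^G$, so $\iterat$ is a generic iteration of $\mathscr{M}_0^G$ of length $\omega_1$ with direct limit $H_\lambda^V$, as desired.

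For the iterability clause, I would argue in $V$, before passing to $V[G]$. Fix a regular $\theta^* \gg \mu$ with $H_{\theta^*}$ and $H_{\theta^*}^\#$ existing; note that $H_\lambda^\#$ exists by hypothesis and, since $\lambda$ is regular, in fact $H_\theta^\#$ exists for every $\theta$ below the least relevant large cardinal — more carefully, one only needs the sharp of the particular transitive structure that collapses. The key point: $\mathscr{M}_0^G$ is the transitive collapse of a countable $M_0^G \prec H_\lambda^V$, and one wants to realize $\mathscr{M}_0^G$ (together with enough of the forcing machinery) as the transitive collapse of $M \cap H_{\theta_0}^{\mathscr{H}}$ for some transitive $\mathsf{ZFC}^\bullet$-model $\mathscr{H}$ and countable $M \prec \mathscr{H}$ containing $\theta_0$, so that Lemma~\ref{lem trans_coll} applies directly and yields iterability of $\mathscr{M}_0^G$. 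The subtlety is that $M_0^G \in V[G]$, not $V$: but $M_0^G$ is obtained as an increasing union along the generic filter of pieces $\Ext(B) \cap H_\lambda^V$ of side-condition models $M$ which live in $V$, so for each such $B$ the model $M$ is (by clause~(a) of the definition of building block) of the form required — $M \prec H_\mu^V$ countable with $\lambda \in M$ — and its transitive collapse restricted to the image of $H_\lambda$ already is an iterable $\mathsf{ZFC}^\bullet$-model by Lemma~\ref{lem trans_coll} applied in $V$ with $\mathscr{H} = H_{\mu^+}^V$ or similar (using that $H_\mu^\#$, hence $H_\lambda^\#$, exists). Thus each approximation to $\mathscr{M}_0^G$ is a collapse of something iterable; but iterability is not obviously preserved under the direct-limit-type union. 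The cleaner route: $\mathscr{M}_0^G$ is itself the transitive collapse of $M_0^G \prec H_\lambda^V$, and $H_\lambda^V$ remains a transitive $\mathsf{ZFC}^\bullet$-model in $V[G]$ (by Lemma~\ref{lem sharps in extension} and ssp-ness), while $(H_\lambda^V)^\#$ survives into $V[G]$ by Lemma~\ref{lem sharps in extension}; so applying Lemma~\ref{lem trans_coll} \emph{in $V[G]$} with $\mathscr{H}$ a transitive model of $\mathsf{ZFC}^\bullet$ computing $(H_\lambda^V)^\#$ and with $M_0^G$ as the countable submodel gives that $\mathscr{M}_0^G$ is iterable in $V[G]$.

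The main obstacle is the bookkeeping around which ambient transitive model $\mathscr{H}$ and which ordinal $\theta_0$ one feeds into Lemma~\ref{lem trans_coll} so that its conclusion is exactly ``the transitive collapse of $M_0^G \cap (H_{\theta_0})^{\mathscr{H}}$ is iterable'' with that collapse literally equal to $\mathscr{M}_0^G$; this requires checking $M_0^G = M_0^G \cap (H_{\theta_0})^{\mathscr{H}}$ on the nose, i.e.\ that $M_0^G$ sees $\theta_0$ and is contained in $(H_{\theta_0})^{\mathscr{H}}$, and that the sharp hypothesis~(b) of Lemma~\ref{lem trans_coll} holds in $V[G]$ for that $\mathscr{H}$ — which is where Lemma~\ref{lem sharps in extension} is invoked, noting $\theforcing{\lambda}{\mu}$ has size at most $|H_\mu|$ and $H_\lambda^V \in H_{|H_\mu|^+}$ and $(H_{|H_\mu|^+})^\#$ exists in $V$ (this last point may need the hypothesis stated slightly more generously, e.g.\ existence of $H_{\theta^*}^\#$ for $\theta^* \gg \mu$, which should be folded into the ambient assumptions). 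The remaining steps — existence of the generic iteration, uniqueness, direct limit $= H_\lambda^V$ — are immediate from Lemma~\ref{lem unique_generic_iteration} and Proposition~\ref{prop it-ind fact} and require no further work.
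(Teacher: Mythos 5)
Your first half is exactly the paper's argument: Proposition~\ref{prop it-ind fact} gives that $(M_\alpha^G:\alpha<\omega_1)$ is an iteration-inducing filtration of $H_\lambda^V$ in $V[G]$ (where $H_\lambda^V$ is still a transitive $\mathsf{ZFC}^\bullet$-model by absoluteness of satisfaction, and $\omega_1$ is preserved since the forcing is ssp), and Lemma~\ref{lem unique_generic_iteration}(2) then produces the generic iteration of $\mathscr{M}_0^G$ with direct limit $H_\lambda^V$. The paper likewise disposes of iterability with a bare citation of Lemma~\ref{lem trans_coll}, and your ``cleaner route'' is the intended one; your detour through ``each approximation collapses to something iterable, but unions may not preserve iterability'' is a dead end you correctly abandon.

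The one point you flag as an unresolved ``bookkeeping obstacle'' does need fixing, and the fix is not to feed $M_0^G$ itself into Lemma~\ref{lem trans_coll}: since $M_0^G\subseteq H_\lambda^V$, it cannot contain $\lambda$, so the hypothesis ``$M$ contains $\theta$'' fails for it on the nose. Instead take
\[ N \;=\; \bigcup\{\Ext(B) : B\in p \text{ for some } p\in G,\ B \text{ an } M\text{-block with } M\cap\omega_1=\delta_0^G\}, \]
the increasing union (all such blocks being $\sqsubseteq_{\text{game}}$-comparable) of countable elementary submodels of $H_\mu^V$ lying in $V$. In $V[G]$, $N$ is a countable elementary submodel of $\mathscr{H}:=H_\mu^V$ containing $\lambda$, and by definition $N\cap H_\lambda^{\mathscr{H}}=M_0^G$. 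Since $\lambda\ll\mu$ and $H_\lambda^\#$ exists in $V$, we have $H_\lambda^\#\in H_\mu^V$, so $\mathscr{H}$ satisfies hypotheses (a) and (b) of Lemma~\ref{lem trans_coll} with $\theta=\lambda$, and it continues to do so as a structure in $V[G]$ (the object $\mathscr{H}$ believes to be $H_\lambda^\#$ is still the true sharp of $H_\lambda^V$ there). Applying the lemma in $V[G]$ to $N\prec\mathscr{H}$ yields literally that the transitive collapse of $M_0^G$, i.e.\ $\mathscr{M}_0^G$, is iterable. With this substitution your argument is complete and coincides with the paper's.
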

\begin{proof}
	This follows from Proposition~\ref{prop it-ind fact} together with Lemma~\ref{lem unique_generic_iteration}. Iterability of $\mathscr{M}_0^G$ follows from Lemma~\ref{lem trans_coll}.
\end{proof}

By combining Theorem~\ref{th it in extension} and Woodin's Lemma~\ref{lem itbounded}, one derives the following conclusions. Note in the first point of Corollary~\ref{cor it in extension} that the $\theforcing{\lambda}{\mu}$-extension is closed under sharps for reals because of Lemma~\ref{lem sharps in extension}. In the third point of Corollary~\ref{cor it in extension}, we reobtain a result of Claverie and Schindler (\cite[Corollary 21]{claverie_schindler}), it  is implied by the second point of Corollary~\ref{cor it in extension} together with Schindler's theorem that  $\mathsf{BMM}$ implies the existence of sharps for all sets (see \cite[Theorem 1.3]{schindler}).

\begin{corollary}\label{cor it in extension} Suppose  $\NS$ is precipitous.
	\begin{enumerate}
		\item If $H_\theta^\#$ exists for $\theta = (2^{<\mu})^+$, then 
		\[\theforcing{\lambda}{\mu} \Vdash   \mathbf{u}_2 > \lambda.\]
		\item 	If $H_\lambda^\#$ exists and the bounded forcing axiom for the forcing $\theforcing{\lambda}{\mu}$ holds,
		\[ \text{ then }\mathbf{u}_2 = \omega_2. \]
		\item \cite[Corollary 21]{claverie_schindler} If Bounded Martin's Maximum $\mathsf{BMM}$ holds, then $\mathbf{u}_2 = \omega_2.$
	\end{enumerate}
\end{corollary}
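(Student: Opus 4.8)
The plan is to prove the three items in order, feeding each into the next. For item (1), I would start with $\mathscr{M}_0^G$ as produced by Theorem~\ref{th it in extension}: working in $V[G]$, this is a countable transitive iterable $\mathsf{ZFC}^\bullet$-model carrying a generic iteration $\iterat = (\mathscr{M}_\beta, j_{\alpha\beta}, G_\alpha : \alpha < \beta < \omega_1)$ of length $\omega_1$ with direct limit $H_\lambda^V$. Iterability holds because $H_\lambda^\#$ exists in $V$ and, by Lemma~\ref{lem sharps in extension} applied with $\kappa$ a cardinal such that $|\theforcing{\lambda}{\mu}| \leq \kappa$ and $H_\lambda \in H_{\kappa^+}$ (so $\theta = (2^{<\mu})^+ > \kappa$ suffices for the hypothesis $H_\theta^\#$ exists), the extension $V[G]$ still satisfies that $x^\#$ exists for every real $x$. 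Now work in $V[G]$: fix a real $x$ coding both $\mathscr{M}_0^G$ and the ordinal $\omega_1^{V[G]}$-many countable ordinals, i.e.\ for each countable $\alpha$, pass to a real $x_\alpha$ coding $\mathscr{M}_0^G$ together with $\alpha$. By truncating the generic iteration at stage $\alpha+1$ and applying Woodin's Lemma~\ref{lem itbounded}, we get $\rank(\mathscr{M}_\alpha^G) < \omega_1^{+L[x_\alpha]} \leq \mathbf{u}_2^{V[G]}$ for every $\alpha < \omega_1$. Since $\bigcup_{\alpha<\omega_1} \mathscr{M}_\alpha^G$ collapses onto $H_\lambda^V$ (more precisely, the direct limit of the $\mathscr{M}_\alpha^G$ along the $j_{\alpha\beta}$ is $H_\lambda^V$, and $\rank$ of this direct limit is the supremum of $j_{\alpha\omega_1}(\rank(\mathscr{M}_\alpha^G))$), it follows that $\lambda = \rank(H_\lambda^V) \leq \sup_{\alpha<\omega_1} \rank(\mathscr{M}_\alpha^G)$ — one must be slightly careful here, since the iteration maps increase ranks, so the cleanest route is to observe that each $\mathscr{M}_\alpha^G$ has rank $<\mathbf{u}_2^{V[G]}$ but also that $\mathbf{u}_2$ is a cardinal (indeed $\leq \omega_2$) that is closed under the relevant iteration maps, hence $\rank(H_\lambda^V) < \mathbf{u}_2^{V[G]}$, giving $\mathbf{u}_2^{V[G]} > \lambda$ as $\lambda$ is a cardinal of $V$ below $\rank(H_\lambda^V)$. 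This yields $\theforcing{\lambda}{\mu} \Vdash \mathbf{u}_2 > \lambda$.

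For item (2), assume $H_\lambda^\#$ exists and the bounded forcing axiom $\mathsf{BFA}(\theforcing{\lambda}{\mu})$ holds. The standard Bagaria-style characterization says $\mathsf{BFA}(\bbP)$ is equivalent to $\Sigma_1$-elementarity of $H_{\omega_2}$ into $H_{\omega_2}^{V^{\bbP}}$. The statement "$\mathbf{u}_2 > \lambda$", or rather a suitable $\Sigma_1$ consequence of it witnessing "$\omega_1^{+L[x]} > \gamma$ for some real $x$ and ordinal $\gamma$", is $\Sigma_1$ over $H_{\omega_2}$ with parameters that can be taken in $H_{\omega_2}$. More carefully: by item (1), $\theforcing{\lambda}{\mu}$ forces the existence of a real coding an iterable model together with a length-$\omega_1$ generic iteration reaching rank $\geq \lambda$; the existence of a real coding an iterable model plus a generic iteration of any given countable length is a $\Sigma_1$ property of $H_{\omega_2}$ (iterability of a fixed countable model is $\Pi^1_2$, but witnessing a single long iteration together with a bound is $\Sigma_1$ once we also have access to sharps, which again survive by Lemma~\ref{lem sharps in extension}). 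Since $\lambda$ can be taken arbitrarily large — we may apply item (1) for each regular $\lambda$ with $\omega_2 \leq \lambda$ and $H_\lambda^\#$ existing — $\mathsf{BFA}(\theforcing{\lambda}{\mu})$ reflects the consequence down, and letting $\lambda \to \infty$ forces $\mathbf{u}_2$ above every ordinal below $\omega_2$, i.e.\ $\mathbf{u}_2 = \omega_2$. The subtlety to handle is making the reflected statement genuinely $\Sigma_1$ with a parameter available in the ground $H_{\omega_2}$; the trick, as in Claverie–Schindler, is that $\mathbf{u}_2$ is the sup of $\omega_1^{+L[x]}$, and "$\exists x \in \mathbb{R}\,(\omega_1^{+L[x]} > \check{\gamma})$" for a ground-model ordinal $\gamma < \omega_2$ is correctly $\Sigma_1(H_{\omega_2})$ with parameter $\gamma$.

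For item (3): $\mathsf{BMM}$ is Bounded Martin's Maximum, i.e.\ $\mathsf{BFA}$ for the class of all stationary-set-preserving forcings. Since $\NS$ precipitous follows from $\mathsf{BMM}$ (indeed $\mathsf{BMM}$ implies $\NS$ is precipitous — this is well known, e.g.\ via $\mathsf{BMM}$ implying that every ground-model-stationary set stays stationary and the usual precipitousness criterion), and since $\theforcing{\lambda}{\mu}$ is strongly ssp hence ssp, the forcing $\theforcing{\lambda}{\mu}$ is in the class to which $\mathsf{BMM}$ applies, so $\mathsf{BFA}(\theforcing{\lambda}{\mu})$ holds. Moreover $\mathsf{BMM}$ implies $H_\lambda^\#$ exists for all $\lambda$ by Schindler's theorem (\cite[Theorem 1.3]{schindler}). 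Thus the hypotheses of item (2) are met for every relevant $\lambda$, and item (2) delivers $\mathbf{u}_2 = \omega_2$. The only genuine obstacle anywhere in this argument is the $\Sigma_1$-reflection bookkeeping in item (2) — getting the existence of the long generic iteration, together with a rank bound below $\mathbf{u}_2$, packaged as a $\Sigma_1$ statement over $H_{\omega_2}$ with a legitimate ground-model parameter — but this is exactly the argument already carried out in \cite{claverie_schindler} and \cite{woodin} and can be cited; everything else is routine.
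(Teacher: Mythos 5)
Your proposal is correct and follows the paper's intended route exactly: item (1) is Theorem~\ref{th it in extension} combined with Woodin's Lemma~\ref{lem itbounded} (with Lemma~\ref{lem sharps in extension} securing sharps for reals in $V[G]$), item (2) is the Claverie--Schindler-style $\Sigma_1$-reflection argument for the bounded forcing axiom, and item (3) is obtained from item (2) together with Schindler's theorem that $\mathsf{BMM}$ yields sharps for all sets (the standing hypothesis that $\NS$ is precipitous already makes $\theforcingg$ ssp, so you need not derive precipitousness from $\mathsf{BMM}$). The one step to tidy is your justification that $\lambda=\rank(H_\lambda^V)<\mathbf{u}_2^{V[G]}$: the claim that $\mathbf{u}_2$ is a cardinal ``closed under the iteration maps'' is not right as stated, but the conclusion is recovered by noting that $j_{\alpha\omega_1}(\xi)=\sup_{\beta<\omega_1}j_{\alpha\beta}(\xi)$ with each $j_{\alpha\beta}(\xi)<\rank(\mathscr{M}_\beta^G)<\omega_1^{+L[x_\beta]}<\mathbf{u}_2^{V[G]}$, which gives $\lambda\le\mathbf{u}_2^{V[G]}$, and strictness follows since $\mathbf{u}_2$ always has uncountable cofinality whereas $\lambda$ acquires cofinality $\omega$ in $V[G]$.
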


\begin{remark}
$\;$	
	\begin{enumerate}
		\item By \cite[Theorem 5.7]{doebler_schindler}, the forcing $\theforcing{\lambda}{\mu}$ is semiproper if and only if the principle $(\dagger)$ holds, that is,  $\theforcing{\lambda}{\mu}$ is semiproper if and only if every ssp forcing is semiproper. Also by  \cite[Theorem 5.7]{doebler_schindler}, $(\dagger)$ is equivalent to $\textrm{Glob-}\mathsf{SCC}^{\textrm{cof}}$ which by Proposition~\ref{prop globSCC games} is equivalent to the existence of club many countable $M\prec H_\theta$ for all $\theta \gg \lambda$ such that $P2 \nearrow \mathbf{G}^{c\text{-}c}_M(H_\lambda).$
		\item It is interesting to compare the forcing $\theforcing{\lambda}{\mu}$ with the forcings from \cite{claverie_schindler} and \cite{ketlarzap}; and although all three are very similar, the exact relation between these forcings remains largely open. We note that the model $M_0^G,$ added by  $\theforcing{\lambda}{\mu},$ has the same strong properties as the countable family of functions that occurs on the generic branch of the Namba forcing in \cite{ketlarzap}. Namely:
		\begin{itemize}
			\item every countable elementary submodel of $H_\lambda^V$ that contains $M_0^G$ as a subset is selfgeneric,
			\item $M_0^G$ contains as a subset a countable set of functions $\{g_n : n < \omega\}$ from $\omega_1^{<\omega}$ to $H_\lambda^V$ such that $H_\lambda^V = \bigcup_{n<\omega} \ran(g_n).$

		\end{itemize}
	The second of these two points is already clear from the foregoing, the first one is slightly more surprising and can be checked through a similar density argument.
				\item The proof of \cite[Lemma 2.1]{doebler_schindler} can directly be adapted to show that if  $\NS$ is precipitous, $H_\theta^\#$ exists for $\theta = (2^{<\mu})^+$, and the bounded forcing axiom for the forcing $\theforcing{\lambda}{\mu}$ holds, then also the admissible club guessing principle $\mathsf{ACG}$ holds. $\mathsf{ACG}$ states that the club-filter on $\omega_1$ is generated by sets of the form\linebreak $ C_x = \{ \alpha < \omega_1 : L_\alpha[x] \vDash \mathsf{KP} \}$ with $x \in \mathbb{R}.$ 
	\end{enumerate}
\end{remark}

\begin{acknow} 
	BV is supported by an Emergence en recherche grant from the Universit\'e Paris Cit\'e (IdeX).
	BDB is supported by a Cofund MathInParis PhD fellowship. This project has received funding from the European Union’s Horizon 2020 research and innovation programme under the Marie Sk\l{}odowska-Curie grant agreement No.~754362.~
	\includegraphics[width=0.55cm]{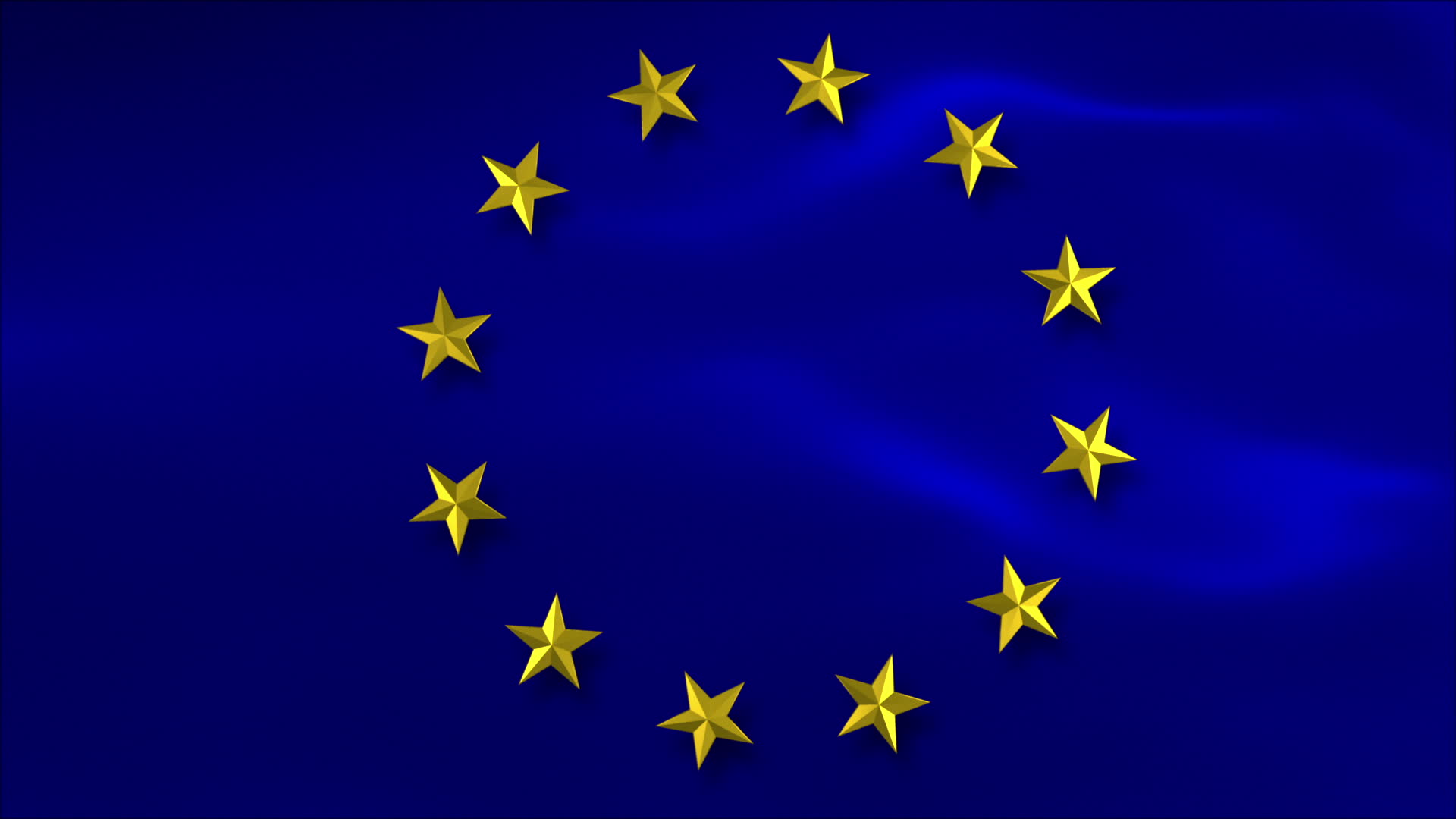}
	
\end{acknow}

\bibliographystyle{amsplain}
\bibliography{BibliographyKLZ}
\end{document}